\DeclareMathAlphabet{\mathbbm}{U}{bbm}{m}{n}
\DeclareFontFamily{U}{BOONDOX-calo}{\skewchar\font=45 }
\DeclareFontShape{U}{BOONDOX-calo}{m}{n}{
  <-> s*[1.05] BOONDOX-r-calo}{}
\DeclareFontShape{U}{BOONDOX-calo}{b}{n}{
  <-> s*[1.05] BOONDOX-b-calo}{}
\DeclareMathAlphabet{\mcb}{U}{BOONDOX-calo}{m}{n}
\SetMathAlphabet{\mcb}{bold}{U}{BOONDOX-calo}{b}{n}
\setlist{noitemsep,topsep=4pt,leftmargin=1.5em}
\DeclareMathAlphabet{\mathbbm}{U}{bbm}{m}{n}
\DeclareMathAlphabet{\mcb}{U}{BOONDOX-calo}{m}{n}
\SetMathAlphabet{\mcb}{bold}{U}{BOONDOX-calo}{b}{n}
\DeclareFontFamily{U}{mathx}{\hyphenchar\font45}
\DeclareFontShape{U}{mathx}{m}{n}{
      <5> <6> <7> <8> <9> <10>
      <10.95> <12> <14.4> <17.28> <20.74> <24.88>
      mathx10
      }{}
\DeclareSymbolFont{mathx}{U}{mathx}{m}{n}
\DeclareMathSymbol{\bigtimes}{1}{mathx}{"91}
\def\emptyset{{\centernot\ocircle}}
\def\id{\mathrm{id}}
\def\CH{\mathcal{H}}
\def\CC{\mathcal{C}}
\newcommand{\mbX}{\mathbf{X}}
\newcommand{\mbZ}{\mathbf{Z}}
\newcommand{\abs}[1]{\left\lvert #1\right\rvert}
\def\mail#1{\burlalt{#1}{mailto:#1}}
\begin{document}

\title{Flows driven by multi-indices Rough Paths}
\date{}

\author{Carlo Bellingeri$^1$, Yvain Bruned$^2$, Yingtong Hou$^2$}
\institute{Université de Haute-Alsace, IRIMAS, 68200 Mulhouse, France\and Université de Lorraine, IECL, F-54000 Nancy, France
  \\
Email:\ \begin{minipage}[t]{\linewidth}
	\mail{carlo.bellingeri@uha.fr},\\
\mail{yvain.bruned@univ-lorraine.fr},
\\\mail{yingtong.hou@univ-lorraine.fr}
\end{minipage}}

\maketitle

\begin{abstract}
\ \ \ \ In this work, we introduce a solution theory for scalar-valued rough differential equations driven by multi-indices rough paths.  To achieve this task, we will show how the flow approach using the log-ODE method introduced by Bailleul fits perfectly in this setting. In addition, we  also describe   the action of the translation of  multi-indices rough paths at the level of rough differential equations.
\end{abstract}

\setcounter{tocdepth}{2}
\tableofcontents

\setlength{\parskip}{0.5\baselineskip}
\section{ Introduction }
We consider the scalar-valued rough differential equation (RDE) 
\begin{equs}\label{RDE}
\mathrm{d}Y_t=f_0(Y_t)\mathrm{d}t+f(Y_t)dX_t=f_0(Y_t)\mathrm{d}t+ \sum_{i=1}^d f_i(Y_t)\mathrm{d}X^i_t\,, \quad y_0 \in \mathbb{R},
\end{equs}
where  $f\colon  \mathbb{R}\to \mathbb{R}^{d}$, $f=(f_1\,, \ldots\,, f_d)$ and $f_0: \mathbb{R} \rightarrow \mathbb{R}$ are sufficiently smooth non-linearities, and $X^i\colon[0,T]\to \mathbb{R}$ is a source term of H\"older regularity $ \gamma \in (0,1) $ for $i=1,\ldots,d$. Solution theory to these equations has been given in the context of rough paths introduced in \cite{lyons1998,Gubinelli2004,Gub06} mainly using geometric and non-geometric rough paths which at the combinatorial level corresponds to describe iterated integrals with words or decorated trees. See \cite{FrizHai} for a gentle introduction to rough paths. A more general framework for rough paths has been given in \cite{Tapia20} where one can use a Hopf algebra that provides the natural properties for iterated integrals. But there are few examples beyond the shuffle Hopf algebra for words \cite{lyons1998,Gubinelli2004} and the Butcher-Connes-Kreimer Hopf algebra for decorated trees \cite{Butcher72,CK1,Gub06,HK15}: One can mention Quasi-geometric rough paths in \cite{Bel23} where stochastic brackets are described via contracted letters and \cite{Manchon20} for planar decorated trees.

Recently, a new combinatorial set called multi-indices has been proposed in the context of singular stochastic partial differential equations. It appears in \cite{OSSW} for the study of quasi-linear equations via the theory of regularity structures which has been originally based on decorated trees in \cite{reg,BHZ}. This new structure proposes an optimal description of the expansion of solutions for scalar-valued equations. Many works around this structure provide an interesting construction of the renormalised models \cite{LOT,LOTT,BL24}. This structure also appears naturally in numerical analysis \cite{BHE24} and in quantum field theory \cite{BH25}.
So far, there is no solution theory of RDEs driven by multi-indices rough paths since multi-indices represent sums of iterated integrals, making it not obvious to establish a fixed point via Picard iteration.

The main aim of this paper is to provide a solution theory when the equation \eqref{RDE} is interpreted via multi-indices rough paths introduced in \cite{Li23}. Our strategy is to use the approximated flow approach provided by Ismaël Bailleul in \cite{B15,B15b,B19,B21}. This approach is very successful in recovering the solution theory for rough differential equations driven by geometric and non-geometric rough paths. It is comparable in spirit to the Davie notion of solution \cite{Davie} and the $\log$-$\mathrm{ODE}$ method \cite{CG96}. It has also been used in a very abstract context where one has to check only two algebraic properties on the elementary differentials to get the solution in \cite{KL23}. We follow this approach for getting the solution theory with multi-indices rough paths. Before stating our main result, let us briefly describe the idea of the flow method. By performing a formal Taylor expansion of  $Y$, the solution of \eqref{RDE}, around a point $Y_s=y$ of the dynamics, the solution should be well  approximated by the quantity
\begin{equs} \label{Butcher_expansion}
\sum_{\substack{ z^{\beta}\in \mathfrak{M}\\0\leq \vert z^{\beta}\vert_{\gamma}\leq N_{\gamma}}}\frac{\Upsilon_f[z^{\beta}](y)}{S(z^{\beta})} \mbX_{s,t}( z^{\beta}) 
\end{equs}
where $ \mathfrak{M} $ is a suitable set of multi-indices and $N_{\gamma}=\lfloor \gamma^{-1}\rfloor$.  Here $ S(z^{\beta}) $ is a symmetry factor, $\Upsilon_f[z^{\beta}]$ is an elementary differential associated with $z^{\beta}$, and $ \mbX_{s,t} $ is a rough path above $X$, a collection of iterated polynomials  built from $X$.The size of these combinatorial objects $ |\cdot|_{\gamma} $ is bounded by $N_{\gamma}$ which is connected to the regularity $\gamma$ of the path $X$ when the trajectories of $X$ are for instance $C^1$.  The expansion \eqref{Butcher_expansion} is written with the B-series formalism \cite{Butcher72} which is quite efficient for describing numerical methods for ODEs. 
Then, we take the logarithm of our rough path $ \mbX_{s,t} $ 
\begin{equs}	
		\boldsymbol\Lambda^{\mbX}_{s,t}= \log_{\star_{{N}_\gamma}}(\mbX_{s,t})
	\end{equs}
where $\star_{{N}_\gamma}$ refers to the truncation of order $N_{\gamma}$ of a suitable product $\star$.
This allows to write an equation that will be almost  satisfied by \eqref{Butcher_expansion}: 
\begin{equation}\label{def_logode_1}
	\left\{\begin{aligned}
		&\dot{Z}_r=  \sum_{\substack{ z^{\beta}\in \mathfrak{M}\\1\leq \vert z^{\beta}\vert_{\gamma}\leq N_{\gamma}}}\frac{\Upsilon_f[z^{\beta}](Z_r)}{S(z^{\beta})} \Lambda^{\mbX}_{s,t}(z^{\beta})\\&Z_0=y\,
	\end{aligned}\right.
\end{equation}
where the map  $\mu^{\mbX}_{s,t}(y)=Z_1$ is the log-ODE almost flow. Then one can get local well-posedness of the equation \eqref{RDE}, if the flow map is an approximated flow in the sense that for $ y \in\mathbb{R} $
\begin{equs} \label{approximated flow}
	\abs{\mu^{\mbX}_{s,t}(y)-\mu^{\mbX}_{r,t}\circ\mu^{\mbX}_{s,r} (y)}\lesssim \abs{t-s}^{a}
\end{equs}
with $a > 1$ and $\lesssim$ stands for inequality up to a constant. The property \eqref{approximated flow} is based on two key identities on the elementary differentials:
\begin{equs} \label{key_identities}
	\begin{aligned}
		\Upsilon_f[u \star v](\phi) 
	& = 
	\Upsilon_f[u]  \circ \Upsilon_f[v](\phi)
\\		\Upsilon_f[ \Delta_\shuffle u] (\phi \otimes \psi) & = \Upsilon_f[u] (\phi \psi)\,,
\end{aligned}
\end{equs}
where  $u,v$ corresponds to linear span of forests of populated multi-indices,   $ \Upsilon_f[u](\varphi) $ is the extension of the elementary differentials to the ones composed with a smooth enough function $\varphi$, and $ \Delta_\shuffle $ is the deshuffle coproduct. One can find more precise definitions in Sections \ref{sec::2.1} and \ref{sec::2.3}. The finding that these two identities are the key to the existence of the ``almost flow"  has been discovered in a general context in \cite{KL23}. Let us mention that they are very close in spirit to the Newtonian maps introduced in \cite{Lejay}.

Now, we present the main statement of the paper. Firstly, we need to introduce some notations describing the functional spaces to set up the theory.
For any $\alpha>1$ with integer part $\lfloor\alpha\rfloor$ and fractional part $\{\alpha\}$,  we say that a function $f\colon  \mathbb{R}\to \mathbb{R}^d $ is a $\alpha$-H\"older vector field, with notation $f\in \CC^{\alpha}(\mathbb{R}, \mathbb{R}^d  )$,  if $f$ is of class $C^{\lfloor\alpha\rfloor}$, that is $\lfloor\alpha\rfloor$ times continuously differentiable, with an $\{\alpha\}$-H\"older continuous  derivative of order $\lfloor\alpha\rfloor$. We introduce the Banach space of bounded $\alpha$-H\"older vector fields $\CC^{\alpha}_b(\mathbb{R}, \mathbb{R}^d )$  such that
\begin{equation}
\label{DefGammaNorm}
\|f\|_{\alpha} = \sum_{r=0}^{\lfloor\alpha\rfloor} \sup_{x}\big|f^{(r)}(x)\big| +\sup_{x\neq z}\frac{\Vert f^{(\lfloor\alpha\rfloor)}(x)- f^{(\lfloor\alpha\rfloor)}(z) \Vert}{|x-z|^{\{\alpha\}}}<\infty\,,
\end{equation}
where   $\Vert\cdot\Vert$ is the euclidean norm on $\mathbb{R}^d$. In addition to the space of vector fields We will use the shorthand notation $\CC^{\alpha}$ and  $\CC^{\alpha}_b$ to denote $\CC^{\alpha}(\mathbb{R}, \mathbb{R}  )$ and $\CC^{\alpha}_b(\mathbb{R}, \mathbb{R})$. As in \cite{BCF,BCFP,Bel22}, following \cite{Li23}, for any  linear map $ \ell_i : \mathfrak{M} \rightarrow \mathbb{R} $ with $ i \in \lbrace 0,...,d \rbrace $, one can define a translation map $T_{\ell}$ whose action on $f$ is given  by
\begin{equs}\label{eq_translated_field}
	 f_i^{\ell} = \sum_{z^{\beta} \in \mathfrak{M}} \frac{\ell_i(z^{\beta})}{S(z^{\beta})} \Upsilon_f[z^{\beta}]\,.
\end{equs}
The following main theorem is split in the paper into Theorem \ref{thm_well-posedness} and Theorem \ref{thm:renormalization_RDE}.

\begin{theorem}
Given $y_0\in \mathbb{R}$  and $\mathbf{X}$ a multi-index rough path of regularity $\gamma\in(0,1) $, for any $f\in \mathcal{C}^{\alpha_1}(\mathbb{R}, \mathbb{R}^d  )$  and $f_0 \in \mathcal{C}^{\alpha_2}$ with $\alpha_1>$ $N_{\gamma}$ (or $N N_{\gamma}$ when one considers the translated RDEs introduced below) and $ \alpha_2>1$ the rough differential equation \eqref{RDE} has a unique local solution flow. In case $f\in \CC^{\alpha_1}_b(\mathbb{R}, \mathbb{R}^d )  $ or it is linear and $f_0 \in \mathcal{C}^{\alpha_2}_b$ or it is linear, the solution is global. Moreover, for any  family of linear maps $ \ell_i : \mathfrak{M} \rightarrow \mathbb{R} $ with $ i \in \lbrace 0,...,d \rbrace $ and such that $\ell_i(z^\alpha) = 0$ for any $|z^\alpha|_{\gamma} > N$,  if we translate the rough path $\mathbf{X}$ as $T_{\ell}\mathbf{X}$ then the rough differential equation driven by $T_{\ell}\mathbf{X}$ is equivalent to the same   rough differential equation with $ f $ replaced by $f^{\ell}$ from \eqref{eq_translated_field}.
\end{theorem}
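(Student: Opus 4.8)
The plan is to implement the approximate‑flow strategy of Bailleul \cite{B15,B19,B21} in the multi‑indices setting, splitting the argument into four steps: construction of an almost flow, the approximate‑flow estimate, the passage to a genuine solution flow (with globality), and finally the identification of translation with renormalization.

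\textbf{Almost flow.} I would first define, for $|t-s|$ small, the vector field $V^{\mbX}_{s,t}(z) = \sum_{1\le|u|_\gamma\le N_\gamma} S(u)^{-1}\,\Upsilon_f[u](z)\,\boldsymbol\Lambda^{\mbX}_{s,t}(u)$ with $\boldsymbol\Lambda^{\mbX}_{s,t} = \log_{\star_{N_\gamma}}(\mbX_{s,t})$, and set $\mu^{\mbX}_{s,t}(z)=Z_1$ for $Z$ the solution of \eqref{def_logode_1}. Since $\alpha_1,\alpha_2>1$, every elementary differential $\Upsilon_f[u]$ appearing in the truncated sum is at least $C^1$, so $V^{\mbX}_{s,t}$ is locally Lipschitz and the ODE is solvable on $[0,1]$; from the analytic bound $|\boldsymbol\Lambda^{\mbX}_{s,t}(u)|\lesssim|t-s|^{|u|_\gamma}$ one records the displacement estimate $|\mu^{\mbX}_{s,t}(z)-z|\lesssim|t-s|^{\gamma}$, uniform in $z$ when $f,f_0$ are bounded.

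\textbf{Approximate‑flow estimate.} This is the core step. I would Taylor‑expand both $\mu^{\mbX}_{s,t}$ and $\mu^{\mbX}_{r,t}\circ\mu^{\mbX}_{s,r}$ into B‑series of the form \eqref{Butcher_expansion} and compare. The first identity in \eqref{key_identities} turns the composition into a B‑series indexed by products $u\star v$, with coefficient governed by $\exp_{\star_{N_\gamma}}(\boldsymbol\Lambda^{\mbX}_{s,r})\star\exp_{\star_{N_\gamma}}(\boldsymbol\Lambda^{\mbX}_{r,t})$, while the second (deshuffle) identity in \eqref{key_identities} handles the pointwise products of increments arising in the Taylor remainder. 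Using Chen's relation $\mbX_{s,t}=\mbX_{s,r}\star\mbX_{r,t}$, valid up to homogeneity $N_\gamma$, together with the Baker--Campbell--Hausdorff identity for $\log_{\star_{N_\gamma}}$, the two B‑series coincide through homogeneity $N_\gamma$, so the discrepancy is a finite sum of terms of $|\cdot|_\gamma$‑homogeneity strictly larger than $N_\gamma$; by the choice of $N_\gamma$ these exceed $1$, which is exactly where the hypothesis $\alpha_1>1/\gamma$ enters — it guarantees that $f$ carries enough derivatives to control the Taylor remainder of that order. This yields \eqref{approximated flow} with $a>1$.

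\textbf{From almost flow to flow, and globality.} I would then invoke Bailleul's abstract theorem: an approximate flow satisfying \eqref{approximated flow} with $a>1$ generates a unique Lipschitz flow $(\varphi_{s,t})$, which by construction solves \eqref{RDE} in the Davie sense; uniqueness of the flow attached to a given almost flow gives uniqueness of the solution, hence the local well‑posedness of Theorem \ref{thm_well-posedness}. For globality, when $f\in\CC^{\alpha_1}_b$ and $f_0\in\CC^{\alpha_2}_b$ the displacement bound of Step 1 is uniform in the starting point, so the flow cannot explode and extends to $[0,T]$; in the linear case a Gr\"onwall estimate on \eqref{def_logode_1} bounds the growth of $\mu^{\mbX}_{s,t}$ exponentially, again ruling out blow‑up. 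Finally, for Theorem \ref{thm:renormalization_RDE} I would show that the log‑ODE attached to $T_\ell\mbX$ with nonlinearity $(f_0,f)$ is literally \eqref{def_logode_1} attached to $\mbX$ with $(f_0,f)$ replaced by $f^\ell$ of \eqref{eq_translated_field}: the translation map $T_\ell$ of \cite{Li23} (see also \cite{BCFP,Bel22}) is a morphism for the product $\star$, hence commutes with $\log_{\star_{N_\gamma}}$, so $\boldsymbol\Lambda^{T_\ell\mbX}_{s,t}=(T_\ell)^{*}\boldsymbol\Lambda^{\mbX}_{s,t}$ up to homogeneity $N_\gamma$; combined with the elementary‑differential identity $\Upsilon_f\circ(T_\ell)^{*}=\Upsilon_{f^\ell}$, which is the unfolding of \eqref{eq_translated_field} through the first line of \eqref{key_identities}, the two defining vector fields of \eqref{def_logode_1} coincide, so the almost flows agree and by Step 3 so do the solution flows (equivalently, compare directly the Davie expansions $\sum_u S(u)^{-1}\Upsilon_f[u](y)\,(T_\ell\mbX)_{s,t}(u)$ and $\sum_w S(w)^{-1}\Upsilon_{f^\ell}[w](y)\,\mbX_{s,t}(w)$).

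\textbf{Main obstacle.} I expect Step 2 to be where the real work lies: propagating the two identities \eqref{key_identities} through the Baker--Campbell--Hausdorff expansion of $\star_{N_\gamma}$ and through the B‑series remainder, and carrying out the precise homogeneity bookkeeping that forces the leftover exponent above $1$, all within the combinatorics of forests of populated multi‑indices and the deshuffle coproduct $\Delta_\shuffle$ that here replace the tree combinatorics of branched rough paths. A secondary difficulty is the cointeraction input used in the last step — that $T_\ell$ is simultaneously compatible with $\star$ and with $\Delta_\shuffle$, so that it genuinely commutes with taking $\star$‑logarithms.
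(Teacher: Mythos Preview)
Your Steps 1--3 are essentially the paper's argument: the log-ODE almost flow of Definition \ref{def_loga_flow}, the Taylor/iteration expansion that recognises $\exp^{\star}_{N_\gamma}(\boldsymbol\Lambda^{\mbX}_{s,t})=\mbX_{s,t}$ (Proposition \ref{main_prop}, using the morphism identity of Proposition \ref{morphism_element}), the almost-flow property via Chen's relation (Proposition \ref{properties_lamost_flow}), and the sewing lemma for flows. One small point: the paper does not invoke Baker--Campbell--Hausdorff; it iterates the ODE $N_\gamma+1$ times and regroups the resulting $\star$-products directly into the truncated exponential, which is cleaner here and avoids any BCH bookkeeping. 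Also note that the paper needs a Lipschitz companion to the expansion estimate (the bound \eqref{taylor_exp_flow_2}) to get the second inequality in the almost-flow property, which feeds the uniqueness part of the sewing lemma; your outline implicitly assumes this but does not flag it.

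For the translation statement your primary route differs from the paper's. You propose to use that $T_\ell$ is a $\star$-morphism, hence commutes with $\log_{\star_{N_\gamma}}$, and then compare the two log-ODE vector fields directly. The paper mentions the identity $T_\ell(u\star v)=(T_\ell u)\star(T_\ell v)$ only as a remark and does \emph{not} use it in the proof of Theorem \ref{thm:renormalization_RDE}; instead it works at the level of the Davie expansion, passes to the adjoint $M_\ell=(\ell\otimes\id)\Delta^{-}$ so that $(T_\ell\mbX_{s,t})(z^\beta)=\mbX_{s,t}(M_\ell z^\beta)$, unwinds the extraction--contraction coproduct $\Delta^{-}$ via the pairing \eqref{eq:adjoint}, and reduces everything to the pointwise identity $\Upsilon_f[T_\ell z^\beta]=\Upsilon_{f^\ell}[z^\beta]$. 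Your parenthetical alternative (``compare directly the Davie expansions'') is exactly this route. Your log-ODE route is legitimate and arguably more conceptual, but it requires proving the $\star$-morphism/cointeraction property for $T_\ell$ in the multi-index setting, which the paper sidesteps; the paper's duality argument trades that algebraic input for an explicit computation with $\Delta^{-}$.
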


\begin{remark}
It turns out that the  translation $T_{\ell}$ maps satisfy the following identity:
\begin{equs} \label{general_translation}
    T_{\ell} (u \star v) = (T_{\ell} u) \star (T_{\ell} v)
    \end{equs}
    This property could be added to the identities \eref{key_identities}, taken from  of \cite{KL23} for building general translated  almost flows in a Hopf algebraic context.
    \end{remark}
    
Let us outline the paper by summarising the content of its sections. In Section~\ref{Sec::2},
we begin with some algebraic preliminaries and recall the basic definitions of multi-indices in the context of differential equations. We first give the general form of multi-indices \eqref{multi-indices} and then introduce their symmetry factor \eqref{symmetry}, inner product \eqref{eq:pairing} and the product $\star_2$ \eqref{definition_star_2} that we call simultaneous grafting.
These definitions allow to set the key product $\star$ in Definition \ref{star_product}. In the second part of this section, we describe multi-indices rough paths in Definition \ref{dfn:genRP} where a truncated version of the product $\star$ is used for giving Chen relation in \eqref{eq:chen}. We define its logarithm in \eqref{logarithm_rough} and after introducing a suitable norm in \eqref{defn_Norm}, we exhibit estimates on the norm of the logarithm of a multi-indices rough path in Proposition \ref{prop_estimate}. The last part of the section is dedicated to some algebraic identities on the vector fields that will be used crucially in the solution theory. We extend the elementary differentials on multi-indices \eqref{elementary_differential_ODE} to elementary differential composed with $\phi \in \CC^\infty(\mathbb{R},\mathbb{R})$ in \eqref{identification_1} and \eqref{identification_2}. The first key identity is the morphism property of the elementary differentials with the product $\star$ in Proposition \ref{morphism_element}. The second one is a Leibniz type identity given in Proposition \ref{Leibniz_identity}.

In Section \ref{Sec::3}, we develop the solution theory for multi-indices rough differential equations via rough flows. We first define an approximated expansion of the solution in \eqref{approximate_exp2}. From this expansion, we define a local flow solution in Definition \ref{DefnGeneralRDESolution}. Then, with the logarithm of a rough path in Definition \ref{def_loga_flow}, we consider  the log-ODE almost flow. These two objects, the approximated expansion and the almost flow, are connected in Proposition~\ref{main_prop}. In Proposition \ref{properties_lamost_flow}, we prove key properties for the almost flow in the rough path context. They rely on the algebraic identities in Propositions \ref{morphism_element} and \ref{Leibniz_identity}. Then, we are able to prove one of the main theorems of this paper, Theorem \ref{thm_well-posedness}, establishing well-posedness for rough differential equations.
We finish the section by recalling the Davie notion of solutions in Definition \ref{Davie_solution} and prove that this notion of solution coincides with the notion introduced with the almost flow in Proposition \ref{prop_Davie}.

In Section \ref{Sec::4}, 
we start by defining the translation map in \eqref{translation_map} that deforms a rough path and the regularity of the translated rough path can be measured from Proposition \ref{prop:regularity_translation}. Since the translation can be described by a Hopf algebra, we will recall the insertion product for multi-indices introduced in \cite{Li23} and then its dual coproduct $\Delta^{\!-}$ \eqref{eq:adjoint}. In particular, the adjoint relation between the simultaneous  insertion $\star_1$ and $\Delta^{\!-}$ is at the core of the proof of the Theorem \ref{thm:renormalization_RDE}, which derives the rough differential equations obtained from a translation map.

%


\subsection*{Acknowledgements}

{\small
	C.B., Y.B. and Y. H. gratefully acknowledge funding support from the European Research Council (ERC) through the ERC Starting Grant Low Regularity Dynamics via Decorated Trees (LoRDeT), grant agreement No.\ 101075208. Views and opinions expressed are however those of the author(s) only and do not necessarily reflect those of the European Union or the European Research Council. Neither the European Union nor the granting authority can be held responsible for them.  Y.B. also thanks the ``Institut des Hautes Études Scientifiques" (IHES) for a long research stay from 7th of January to 21st of March 2025, where part of this work was written.
} 

\section{Rough paths and elementary differentials over multi-indices}
\label{Sec::2}
We start our analysis by recalling the algebraic and analytic structures of rough paths based on multi-indices, as introduced in  \cite{Li23}. To keep the notion  of rough path over multi-indices more consistent with the general notion of rough path over a hopf algebra, see e.g. \cite[Def. 2.1.]{Bel23},
we will describe the multi-indices Hopf algebra using the recent approach from \cite{BHE24,BH24}.

\subsection{Algebraic preliminaries}
\label{sec::2.1}
Let us rewrite the rough differential equation \eqref{RDE} in the integral form
\begin{equs}\label{eq:integral}
	Y_t= Y_0 + \int_0^t f_0(Y_r)\mathrm{d}r + \int_0^t f(Y_r)\mathrm{d}X_r.
\end{equs}
Assume that $f$ is analytical. The Taylor expansion of the vector field around $Y_0$ reads
\begin{equs} \label{Taylor}
	f_i(u) = \sum_{k \geq 0} \alpha_{i,k}(Y_0)(u-Y_0)^k,
\end{equs}
where $\alpha_{i,k}$ represents the $k$-th order derivative of $f_i$ divided by $k!$ for $i=0,\ldots,d$.
Then one can expand the solution formally by plugging back the \eqref{Taylor} to \eqref{eq:integral} and expanding $Y_r$ recursively.
It can be observed that, for a scalar-valued vector field $f$, due to the iteration described above, all the Taylor coefficients in the expansion of the solution collapse to monomials of ordinary derivatives of $f$ with respect to $Y$. This allows us to use the combinatorial object multi-index which was initially introduced in the context of scalar singular SPDEs \cite{OSSW, LOT,LO23,LOTT} to represent the expansion.

The multi-index considered in the paper has the following form
	\begin{equs}\label{multi-indices}
		z^\beta = \prod_{(i ,k) \in  [0,d] \times \mathbb{N}} 
		z_{(i,k)}^{\beta(i,k)}
	\end{equs}
	which is a monomial of the abstract variable $z_{(i,k)}$ where $k$ is the arity of the abstract variable, and 
	the index $i$ representing the ``type" of the abstract variable takes value in $[0,d] := \{0,1,\ldots,d\}$ as we have $(d+1)$-dimensional driven process $X$ in which $X^0 = t$. The multi-index $\beta: [0,d] \times \mathbb{N} \rightarrow \mathbb{N}$ has the entry $\beta(i,k)$ evaluating the frequency of the variable $z_{(i,k)}$ in the multi-index $z^\beta$. 
	Moreover, we assume that $\beta$ has finitely many non-zero entries (finitely supported).
	For conciseness in notations, we make the identification between the space of the multi-index $\beta$ and the space of multi-index $z^\beta$ through \eqref{multi-indices}.
 Here the product works as the usual monomial multiplication
\begin{equs}
	z^\alpha z^\beta = z^\beta z^\alpha = z^{\alpha + \beta}  \quad \text{and} \quad \prod_i z^{\beta_i} = z^{\sum_i \beta_i}
\end{equs}
and we call it a ``multi-index product".

On the other hand, we can construct the symmetric algebra of multi-indices in which the commutative product is named `` forest product of multi-indices" as it is the counterpart to the forest product of rooted trees.
		A forest of multi-indices is a collection of multi-indices without order among them (a juxtaposition). The commutative forest product is denoted by ${\prod^{\bullet}}$ or $ \bullet$. The cardinal of the forest is defined as the number of individual multi-indices in this collection and it is denoted by $\mathrm{card}(\cdot)$. The identity element of the forest product is the empty forest $\emptyset$ with the forest cardinal of $0$.
  To further simplify the notation we use $z^{\bullet \tilde{\beta}}$ denoting a forest of multi-indices, where $\tilde\beta$ is a collection of multi-indices without order.  For $\tilde{\beta} = \{\beta_1,\ldots,\beta_n\}$
\begin{equs}
	z^{\bullet \tilde{\beta}} := 	\prod_{j=1}^{\bullet n} z^{\beta_j}.
\end{equs}
The repetition of individual populated multi-indices in $\tilde{\beta}$ is allowed, which means $z^{\beta_1},\ldots,z^{\beta_n}$ are not necessarily to be distinct.

To construct a graded algebra, we define the norm of a multi-index as the sum of each entry of its frequency multi-index,
\begin{equs}
|z^\beta|= |\beta| := \sum_{(i ,k) \in  [0,d] \times \mathbb{N}}  \beta (i ,k).
\end{equs}
This norm takes values in integers and is closely related to the Hölder regularity of signatures of rough paths through the later-defined ``$\gamma$-norm" \eqref{gammaNorm}. Thus it is suitable in grading associated multi-indices algebra. 
For $0 < \gamma < 1$, to describe the regularity of the rough path, we define a ``$\gamma$-norm" 
\begin{equs}\label{gammaNorm}
|z^\beta|_{\gamma}=	|\beta|_{\gamma} := \sum_{k \in   \mathbb{N}}  \frac{\beta (0 ,k)}{\gamma} + \sum_{(i ,k) \in  [1,d] \times \mathbb{N}}  \beta (i ,k).
\end{equs}

One can notice that we have symmetry in coefficients while Taylor expanding the solution. Therefore, to write a more compact series, we will use the  symmetry factor of a multi-index which is defined as the monomial of the factorial of arities
\begin{equs} \label{symmetry}
	S(z^\beta) := \prod_{(i ,k) \in  [0,d] \times \mathbb{N}} 
	(k!)^{\beta(i,k)}.
\end{equs}
The adjoint relation (details in \eqref{dual_Delta}) between algebra and its dual coalgebra is described by the inner product which is determined by the symmetry factor
\begin{equs} \label{eq:pairing}
	\langle z^\alpha , z^\beta \rangle := \delta_{\alpha,\beta} S(z^\alpha)
\end{equs}
where $\delta_{\alpha,\beta}$  is the Kronecker delta.
The values defined above can be generalised to forests of multi-indices as follows
\begin{equs}
	|\prod_{i=1}^{\bullet n} z^{\beta_i}| := \sum_{i=1}^n |z^{\beta_i}|, \quad 	|\prod_{i=1}^{\bullet n}z^{\beta_i}|_{\gamma} := \sum_{i=1}^n |z^{\beta_i}|_{\gamma}
\end{equs}
and for distinct $z^\beta_j$
\begin{equs}
	S\left(\prod_{j=1}^{\bullet m} \left(z^{\beta_j}\right)^{ \bullet r_j}\right) := \prod_{j=1}^m r_j! S\left(z^{\beta_j}\right)^{r_j}.
\end{equs}

In this paper, we only consider multi-indices that appear in the expansion of solutions, which satisfy the population condition defined below
\begin{equs}\label{population_condition}
	|\beta| - \sum_{(i ,k) \in  [0,d] \times \mathbb{N}} k \beta (i ,k) = 1\,.
\end{equs}
In the sequel we will call those multi-indices ``populated multi-indices" and the set of them is denoted by $\mathfrak{M}$. Furthermore we define $ \mathfrak{M}^{N}$, a subset of $\mathfrak{M}$, which consists of all the populated multi-indices with norm smaller than or equal to $N$. Accordingly, $\mathfrak{F}$ represents the set of the forests of populated multi-indices and $\mathfrak{F}^N$ is the subset $ \mathfrak{F}$, in which each forest has forest norm less than or equal to $N$.

 For $z^\alpha, z^\beta \in \mathfrak{M}$, let us define  $\triangleright: \mathfrak{M} \rightarrow \mathfrak{M}$
\begin{equs}	
	z^\beta \triangleright z^\alpha := z^\beta D z^\alpha ,
	\quad
	D:= \sum_{(i ,k) \in  [0,d] \times \mathbb{N}} z_{(i,k+1)}\partial_{z_{(i,k)}}\,,
\end{equs}
where $\partial_{z_{(i,k)}}$ is the directional derivative in the coordinate of ${z_{(i,k)}}$. ( i.e., for any $z^\alpha \in \mathfrak{M}$, if $\alpha(z_{(i,k)}) \ne 0$ then $ \partial_{z_{(i,k)}}z^\alpha = \frac{z^{\alpha}}{z_{(i,k)}} $. Otherwise, $ \partial_{z_{(i,k)}}z^\alpha = 0$.) The derivation of multi-indices defined below is a free Novikov product (see \cite{DL,Li23,BD23}) which is a pre-Lie product (see \eqref{pre-Lie}) satisfying the right-NAP identity (see \eqref{NAP}). One has for $z^\alpha, z^\beta, z^{\gamma} \in \mathfrak{M}$
	\begin{equs} \label{pre-Lie}
(z^\alpha \triangleright  z^\beta )  \triangleright z^{\gamma} - 		z^\alpha \triangleright  ( z^\beta   \triangleright z^{\gamma} ) &= (z^\beta \triangleright  z^\alpha )  \triangleright z^{\gamma} - 		z^\beta \triangleright  ( z^\alpha   \triangleright z^{\gamma} ),
	\\ \label{NAP}
	(z^\alpha \triangleright  z^\beta )  \triangleright z^{\gamma}  &= (z^\alpha \triangleright  z^\gamma )  \triangleright z^{\beta}.
	\end{equs}
Through the Guin-Oudom procedure \cite{Guin1,Guin2} one can construct a Grossman-Larson type associative product $\star$ which is isomorphic to the universal enveloping algebra of the underlying pre-Lie algebra. 
The first step of the procedure is to generalise the  domain of the product $\triangleright$ to 
$\mathrm{Span}_{\mathbb{R}}(\mathfrak{F}) \times \mathrm{Span}_{\mathbb{R}}(\mathfrak{F})$ such that the new product satisfies properties in \cite[Proposition 2.7]{Guin2}. Firstly, define this new product named ``simultaneous grafting" on the subspace  $\star_2: \mathfrak{F} \times \mathfrak{F} \rightarrow \mathrm{Span}_{\mathbb{R}}(\mathfrak{F})$ as the following. 
	\begin{equs} \label{definition_star_2}
		&\emptyset \star_2 \emptyset :=  \emptyset , \quad  
		\prod_{i=1}^{\bullet n} z^{\beta_i} \star_2  \emptyset = \emptyset \star_2 \prod_{i=1}^{\bullet n}z^{\beta_i}=\prod_{i=1}^{\bullet n} z^{\beta_i},
		\\&
		\prod_{i=1}^{\bullet n} z^{\beta_i} \star_2  z^{\bullet \tilde \alpha} := \prod_{i=1}^n z^{\beta_i} D^n z^{\bullet \tilde \alpha}
	\end{equs}
	for any $n \ge 1$ and $z^{\beta_i} \in \mathfrak{M}$ and $ z^{\bullet \tilde \alpha} \in \mathfrak{F} \setminus \emptyset$.
Here we require $\star_2$ to obey the Leibniz rule with respect to the forest product by setting 
\begin{equs}\label{Leibniz_D}
	D	z^{\bullet \tilde{\alpha}} = \sum_{j=1}^n   \left(\prod_{i\ne j}^{\bullet} z^{\alpha_i}\right)\, \bullet\, Dz^{\alpha_j}.
\end{equs}
Then, on the full space, $\star_2: \mathrm{Span}_{\mathbb{R}}(\mathfrak{F}) \times \mathrm{Span}_{\mathbb{R}}(\mathfrak{F}) \rightarrow \mathrm{Span}_{\mathbb{R}}(\mathfrak{F})$ is the linear extension of \eqref{definition_star_2}.

Notice that $\star_2$ is not associative. Therefore, the second step in the Guin--Oudom construction is to build an associative product $\star$ upon the $\star_2$ product by 
the deshuffle coproduct $\Delta_\shuffle : \mathfrak{F} \rightarrow \mathrm{Span}_{\mathbb{R}} (\mathfrak{F} \otimes \mathfrak{F})$, defined recursively as 
\begin{equs}
	& \Delta_\shuffle z^\beta = \emptyset \otimes z^\beta + z^\beta \otimes \emptyset, \quad \text{ for } z^\beta \in \mathfrak{M}
	\\&
	\Delta_\shuffle \prod_{i=1}^{\bullet n} z^{\beta_i} =  \prod_{i=1}^{\bullet n} \Delta_\shuffle  z^{\beta_i}
\end{equs}
where the forest product of two tensors are 
\begin{equs}
	(z^{\beta_1}\otimes z^{\beta_2})  \bullet (z^{\beta_3} \otimes z^{\beta_4}) 
	:= z^{\beta_1} {\bullet} z^{\beta_3} \otimes z^{\beta_2} {\bullet} z^{\beta_4}.
\end{equs}
We then take the linear extension of $\Delta_\shuffle$ and generalise it to $\Delta_\shuffle: \mathrm{Span}_{\mathbb{R}}(\mathfrak{F}) \times \mathrm{Span}_{\mathbb{R}}(\mathfrak{F}) \rightarrow \mathrm{Span}_{\mathbb{R}}(\mathfrak{F})$.
\begin{definition}[$\star$ product] \label{star_product}
The product	$\star: \mathfrak{F} \times \mathfrak{F} \rightarrow \mathrm{Span}_{\mathbb{R}}(\mathfrak{F})$ is defined through the $\star_2$ product and the deshuffle coproduct $\Delta_\shuffle$ in the following way. For any $n \ge 1$ and $z^{\beta_i} \in \mathfrak{M}$ and $ z^{\bullet \tilde \alpha} \in \mathfrak{F}\setminus \emptyset$,
	\begin{equs} \label{definition_star}
		\begin{aligned}
		&\emptyset \star \emptyset :=  \emptyset , \quad \prod_{i=1}^{\bullet n} z^{\beta_i} \star  \emptyset = \emptyset \star \prod_{i=1}^{\bullet n}z^{\beta_i}=\prod_{i=1}^{\bullet n} z^{\beta_i},
		\\&
		\prod_{i=1}^{\bullet n} z^{\beta_i} \star  z^{\bullet \tilde \alpha} := \mu  (\id  \otimes \cdot \star_2   z^{\bullet \tilde \alpha})  \Delta_\shuffle \left(\prod_{i=1}^{\bullet n}z^{\beta_i}\right)
		\end{aligned}
	\end{equs}
	where $\mu$ is the forest product which means for any $z^{\bullet \tilde \alpha}, z^{\bullet \tilde \beta} \in \mathfrak{F}$
	\begin{equs}
	\mu(z^{\bullet \tilde \alpha} \otimes z^{\bullet \tilde \beta}) = z^{\bullet \tilde \alpha}  \bullet z^{\bullet \tilde \beta}.
	\end{equs}
	Let us rewrite $\Delta_\shuffle (\CF) = \sum_{(\CF)_{\Delta_\shuffle}} \CF^{(1)} \otimes \CF^{(2)}$ for $\CF \in \mathfrak{F}$ and the sum runs over all possible splittings. Then for functions $f,g$ the composition in \eqref{definition_star} is defined as
	\begin{equs}
		\mu(f\otimes g)\Delta_\shuffle (\CF) :=  \sum_{(\CF)_{\Delta_\shuffle}} f(\CF^{(1)}) \bullet g(\CF^{(2)})\, .
	\end{equs}
Moreover, we can generalise $\star$ from $\mathfrak{F} \times \mathfrak{F}$ to  $ \mathrm{Span}_{\mathbb{R}}(\mathfrak{F}) \times \mathrm{Span}_{\mathbb{R}}(\mathfrak{F}) $ by taking the linear extension.
\end{definition}
Consequently, following the same reasoning of  \cite[Lemma 2.10]{Guin2} we can prove that $\star$ is associative and together with the deshuffle coproduct creates a Hopf algebra
$\mathcal{H}=(\mathrm{Span}_{\mathbb{R}}(\mathfrak{F}), \star, \Delta_\shuffle)$ which is graded as
\begin{equs}
	\CH = \bigoplus_{N=0}^\infty \CH_{(N)}
\end{equs}
where $\CH_{(N)}$ is the real vector space spanned by $\mathfrak{F}^N$ 
\begin{equs}
	\CH_{(N)} = \mathrm{Span}_{\mathbb{R}}(\{z^{\bullet \tilde \beta} \in \mathfrak{F}: | z^{\bullet \tilde \beta}| = N\})
\end{equs}
and we identify the space $\CH_{(0)}$ (the linear span of the empty forest) with real numbers.

The graded dual algebra of $\CH$ equips with a coproduct denoted by $ \Delta $ which is the adjoint of the product $\star$ under the pairing defined in \eqref{eq:pairing}
\begin{equs} \label{dual_Delta}
\langle	v\star w, u \rangle 
= \langle	v \otimes w, \Delta u \rangle,
\end{equs}
for any $v,w \in \CH$ and $u \in \mathrm{Span}_{\mathbb{R}}(\mathfrak{F})$.
The coproduct $\Delta$ is well defined and its explicit formula of $\Delta$ is given in \cite[Theorem 3.5]{BH24}. Therefore, one has  the dual Hopf algebra
$\CH^*=(\mathrm{Span}_{\mathbb{R}}(\mathfrak{F}), {\bullet}, \Delta)$, where we identify forests and dual forests with the same notation.

\subsection{Rough paths}
We use the explicit grading of the multi-index  Hopf algebra and its operations to define the notion of rough paths in this context by quickly recalling the usual theory of rough paths over a Hopf algebra, see \cite{Bel23,Tapia20, Manchon20}. For any given integer $N\geq 0$ we consider the  vector space  of truncated multi-indices forests
\[\mathcal{H}^N(\mathfrak{M})= \bigoplus_{n=0}^N \CH_{(n)} \]
obtained by simply quotienting the algebra $(\mathcal{H}, \star) $ with respect to the ideal generated by all multi-index monomials of homogeneity strictly bigger than $N$. The quotient product will be denoted  by $\star_N $.


Similarly,  inside $\mathcal{H}$, we introduce the subset of  truncated characters $\mathcal{G}^{N}(\mathfrak{M}) $ and truncated primitive elements $\mathfrak{g}^{N}(\mathfrak{M})$.
Suppose $\mathbf{x} \in \CH$ is in the form of
\begin{equs}
	\mathbf{x} = \sum_{u \in \mathfrak{F}}\frac{{\mathbf{x}}(u)}{S(u)}u
\end{equs}
where ${\mathbf{x}}(\cdot):  \mathrm{Span}_{\mathbb{R}}(\mathfrak{F}) \mapsto \mathbb{R}$ is linear in $\mathfrak{F}$.
Notice that $\mathbf{x}(u) = \langle\mathbf{x}, u \rangle$.
Then we say $\mathbf{x} \in \mathcal{G}^{N}(\mathfrak{M}) \subset \CH$ if it satisfies the following identity
\begin{equs}\label{group_property}
 \mathbf{x}(u  \bullet v) = \mathbf{x}(u)\mathbf{x}(v)
\end{equs}
for any $u, v \in \mathfrak{F}$ with $|u|+|v| \le N$.
On the other hand we say $\mathbf{x} \in \mathfrak{g}^{N}(\mathfrak{M})\subset \CH$ if $\mathbf{x} \in \CH^N(\mathfrak{M})$ and
\begin{equs}\label{Lie_property}
	\Delta_{\shuffle} \mathbf{x} = \mathbf{x} \otimes \emptyset + \emptyset \otimes \mathbf{x}
\end{equs}
which means $\mathbf{x}$ are the primitive elements. It is also helpful to check that, from the adjoint relation, we have the property that for any $\mathbf{x}_1, \mathbf{x}_2 \in \CH$ and $u \in \CH^*$
\begin{equs}\label{multipilication}
	 (\mathbf{x}_1 \star \mathbf{x}_2)(u)=
	\sum_{v,w \in \CH}\mathbf{x}_1(v)\mathbf{x}_2(w) \frac{\langle v\otimes w, \Delta u\rangle }{S(v)S(u)} = \mathbf{x}_1 (u^{(1)}) \mathbf{x}_2(u^{(2)})\,,
\end{equs}
where we use the Sweedler notation $\mathbf{x}_1 (u^{(1)}) \mathbf{x}_2(u^{(2)})$ for the coproduct $\Delta$.

Using standard properties of connected graded Hopf algebras, we have that  
$(\mathcal{G}^{N}(\mathfrak{M}), \star_{N})$ is  a finite-dimensional Lie group with Lie algebra explicitly given by $\mathfrak{g}^{N}(\mathfrak{M})$ and Lie operation given by the commutator of $\star_{N}$.  Moreover, in this setting  the usual exponential map of the Lie group turns out to be a bijection which can be explicitly written via the maps   $\exp_{\star_N}\colon \mathcal{H}^N(\mathfrak{M})\to \mathcal{H}^N(\mathfrak{M})$,  $\log_{\star_N}\colon \mathcal{H}^N(\mathfrak{M})\to \mathcal{H}^N(\mathfrak{M})$ defined by
 \begin{equation}\label{eq_exp_Hopf}
 \exp^{\star}_{N}{\mathbf{x}}= \sum_{n\geq 0}^{N}\frac{\mathbf{x}^{\star_N n}}{n!}\,,\quad \log^{\star}_{N}{\bf x}= \sum_{n\geq 1}^{N}(-1)^{n+1}\frac{(\mathbf{x}-\emptyset)^{\star_N n}}{n}.
 \end{equation}

The notion of rough path then is defined starting from two parameters paths defined from the $2$ simplex $\Delta^2_{T}=\{(s,t)\in [0,T]^2\colon s\leq t\}$.
\begin{definition}  \label{dfn:genRP}
Let $\gamma\in (0,1)$ and $N_{\gamma}=\lfloor \gamma^{-1}\rfloor$. A \emph{multi-index rough path} of (H\"older) regularity $\gamma$ is a map $\mbX\colon\Delta_T^2\to \mathcal{G}^{N_{\gamma}}(\mathfrak{M})$ satisfying these properties:
\begin{itemize}

\item for any $s,u,t\in[0,T]$ such that $s\leq u\leq t$ one has 
\begin{equation}\label{eq:chen}
\mbX_{s,u}\star_{N_{\gamma}} \mbX_{u,t}=\mbX_{s,t}\,;
\end{equation}
\item for all $z^{\beta}\in \mathfrak{M}^{N_{\gamma}}$ 
\begin{equation}\label{eq:genrpbound}
\sup_{s\neq t\in \Delta_T^2}\frac{\vert \mbX_{s,t}(z^{\beta})\vert}{|t-s|^{|\beta|_{\gamma}}}<+\infty\,.
\end{equation}
\end{itemize}
\end{definition}
\begin{remark}
Our definition of a multi-index rough path is equivalent to the one given in \cite[Definition 3.10]{Li23}. While the formulations may appear different at first glance, they ultimately describe the same algebraic and analytic structure, ensuring consistency across approaches. Indeed in that case the author formulates the identity \eqref{eq:chen} in terms of of the universal enveloping algebra $U(\mathfrak{M})$ and its action on $(\mathrm{Span}_{\mathbb{R}}(\mathfrak{F}), {\bullet}, \Delta)$ which is isomorphic to $\mathcal{H}$.
\end{remark}
Since by construction any given  multi-index rough path $\mbX$ of regularity $\gamma$ is an element of $\mathcal{G}^{N_{\gamma}}(\mathfrak{M})$, we can compute the logarithm of  $\mbX$  and we  will use the shorthand notation
\begin{equs}
	\label{logarithm_rough}
	\boldsymbol\Lambda^{\mbX}_{s,t}= \log_{\star_{{N}_\gamma}}(\mbX_{s,t})\,.
\end{equs}
Moreover, from the multiplicative property one has immediately 
\[\sup_{s\neq t\in \Delta_T^2}\frac{\vert \mbX_{s,t}(u)\vert}{|t-s|^{|u|_{\gamma}}}<\infty \]
for any $u\in \mathfrak{F}\colon |u|\leq N$. For any given map $\mbZ\colon \Delta_T^2\to \mathcal{H}^N(\mathfrak{M})$ we define the  quantity
\begin{equation}\label{defn_Norm}
\Vert \mbZ\Vert= \max_{u\in \mathfrak{F}(\mathfrak{M})\colon |u|\leq N} \left(\sup_{s\neq t\in \Delta_T^2}\frac{\vert \mbZ_{s,t}(u)\vert}{|t-s|^{|u|_{\gamma}}}\right)^{\frac{1}{|u|}}\,.
\end{equation}
 Thanks to Definition \ref{dfn:genRP} one has $\Vert \mbX\Vert<\infty$ for any multi-index rough path $\mbX$ by simply combining \eqref{eq:genrpbound} with the group property \eqref{group_property}. The factor $\frac{1}{|u|}$ inside \eqref{defn_Norm} allows to compare in an equivalent way a rough path and its logarithm, see \cite[Lemma 3.15]{KL23} for a proof of the following estimate.
\begin{proposition}\label{prop_estimate}
For any  $\gamma\in (0,1)$ and $d\geq 1$ integer there exists two constants $c, C>0$ depending only on $\gamma$ and $d$ such that for any  multi-index rough path $\mathbf{X}$ of  regularity $\gamma$  one has
\[c\Vert \mbX\Vert\leq \Vert \Lambda^{\mbX}\Vert\leq C\Vert \mbX\Vert.\]

\end{proposition}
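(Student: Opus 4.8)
The statement asserts that, up to multiplicative constants depending only on $\gamma$ and $d$, the norm $\Vert\cdot\Vert$ of a multi-index rough path and that of its logarithm coincide. The plan is to reduce the inequality to a purely algebraic/combinatorial statement about the finitely many coefficients of the truncated exponential and logarithm maps \eqref{eq_exp_Hopf}, and then invoke an abstract result in the spirit of \cite[Lemma 3.15]{KL23}. Concretely, I would first observe that the pairing $\mbX \mapsto (\mbX(u))_{|u|\le N_\gamma}$ realises $\mathcal{H}^{N_\gamma}(\mathfrak{M})$ as a finite-dimensional graded vector space, and that both $\exp_{\star_{N_\gamma}}$ and $\log_{\star_{N_\gamma}}$ are graded-polynomial in these coordinates: if $|u| = n$ then $(\log_{\star_{N_\gamma}}\mbX)(u)$ is a universal polynomial expression in the coordinates $\mbX(v)$ with $|v|\le n$, and conversely for $\exp$. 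This is immediate from the nilpotency of $(\mbX - \emptyset)$ modulo the ideal of elements of homogeneity $> N_\gamma$, so the sums in \eqref{eq_exp_Hopf} are finite and each $\star_{N_\gamma}$-power raises homogeneity by at least one.

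Next I would set up the two halves separately. For the bound $\Vert\Lambda^{\mbX}\Vert \le C\Vert\mbX\Vert$: fix $u$ with $|u| = n \le N_\gamma$ and write $\Lambda^{\mbX}_{s,t}(u)$ using the logarithm series. Each term is a product of coefficients $\mbX_{s,t}(v_1)\bullet\cdots$, coming from iterated $\star_{N_\gamma}$-products whose total homogeneity equals $n$ by the grading; using the definition of $\star$ via the deshuffle coproduct and $\star_2$ (Definition \ref{star_product}) together with the adjointness \eqref{dual_Delta}, each such coefficient is a finite linear combination of products $\prod_j \mbX_{s,t}(w_j)$ with $\sum_j |w_j| = n$. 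By hypothesis $|\mbX_{s,t}(w_j)| \le \Vert\mbX\Vert^{|w_j|}|t-s|^{|w_j|_\gamma}$, and since $|\cdot|_\gamma$ is additive under the forest product and under these splittings, the product is bounded by $(\text{comb.\ constant})\,\Vert\mbX\Vert^{n}\,|t-s|^{|u|_\gamma}$. Taking the supremum over $s\neq t$, the $\frac1{|u|}$-th root in \eqref{defn_Norm}, and the maximum over $u$ yields $\Vert\Lambda^{\mbX}\Vert \le C\Vert\mbX\Vert$ with $C$ absorbing the finitely many combinatorial constants (which depend only on $N_\gamma$ and $d$, hence only on $\gamma$ and $d$). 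The lower bound $c\Vert\mbX\Vert \le \Vert\Lambda^{\mbX}\Vert$ is obtained symmetrically, by expressing $\mbX_{s,t} = \exp_{\star_{N_\gamma}}(\Lambda^{\mbX}_{s,t})$ and running the identical estimate with the roles reversed, using that $\Lambda^{\mbX}$ is a primitive element (lies in $\mathfrak{g}^{N_\gamma}(\mathfrak{M})$) so that the relevant coefficients behave well under $\Delta_\shuffle$.

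The main obstacle is the homogeneity bookkeeping in the second direction, $c\Vert\mbX\Vert\le\Vert\Lambda^{\mbX}\Vert$: one must check that the exponent $\frac1{|u|}$ built into the norm \eqref{defn_Norm} is exactly what makes the estimate scale-invariant, i.e.\ that in both $\exp$ and $\log$ a coefficient indexed by $u$ of homogeneity $n$ is controlled by the $n$-th power of the norm and no worse. This is precisely the point of the $\frac1{|u|}$ normalisation and is what allows one to quote \cite[Lemma 3.15]{KL23} verbatim: that lemma proves, in the abstract setting of a connected graded Hopf algebra with a norm of this exact form, that $\exp$ and $\log$ are bi-Lipschitz between the group and its Lie algebra with constants depending only on the dimension and the grading. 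Since $\mathcal{H}^{N_\gamma}(\mathfrak{M})$ is such a Hopf algebra (established above Proposition \ref{prop_estimate}) and $\Vert\cdot\Vert$ has the required form, the proposition follows directly. I would therefore structure the write-up as: (i) recall that $\mathcal{H}^{N_\gamma}(\mathfrak{M})$ is connected graded and finite-dimensional in each degree; (ii) verify the norm \eqref{defn_Norm} matches the hypotheses of \cite[Lemma 3.15]{KL23}; (iii) apply that lemma, noting the constants depend only on $N_\gamma = \lfloor\gamma^{-1}\rfloor$ and $d$.
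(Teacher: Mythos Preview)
Your proposal is correct and matches the paper's approach exactly: the paper does not give a proof but simply refers the reader to \cite[Lemma 3.15]{KL23}, noting that the $\frac{1}{|u|}$ normalisation in \eqref{defn_Norm} is precisely what makes the comparison work. Your plan to verify the hypotheses of that lemma and invoke it is therefore the intended argument, and your sketch of the underlying homogeneity bookkeeping is a faithful (indeed more detailed) account of why the cited lemma applies.
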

By simply applying the property \eqref{Lie_property} one has that 
$ \Lambda^{\mbX}_{s,t}( \prod_{i=1}^{\bullet n} z^{\beta_i} )=0$ for any integer  $n\geq 2$ leaving a non-trivial value of $\Lambda^{\mbX}$ only over the set $\mathfrak{M}$. 

As with all other families of rough paths,  any multi-index rough path of regularity $\gamma\in (0,1) $ extends uniquely to a path $\mbX'\colon\Delta_T^2\to \CH'$ with $\CH'$ the  stands for the space of continuous $\mathbb{R}$ linear forms over $\mathrm{Span}_{\mathbb{R}}(\mathfrak{F})$, satisfying the properties \eqref{eq:chen}, \eqref{group_property} without any truncation and property \eqref{eq:genrpbound} for any $z^{\beta}\in \mathfrak{M}$, see \cite[Proposition 3.13]{Li23}.  In what follows, we will always identify  any multi-index rough path $\mbX$ with its extension $\mbX'$ using the same notation so that  the expression $\mbX_{s,t}(u)$ is  always rigorously defined even when $u>N_{\gamma}$.

\subsection{Elementary differentials and pseudo-bialgebra property}
\label{sec::2.3}
In this section we will introduce elementary differentials from the Taylor coefficients in the expansion of the solution. Since we now focus on the algebraic properties, it suffices to consider $f \in \CC^{\infty}(\mathbb{R}, \mathbb{R}^d)$ and  $f_0\in \CC^{\infty}$. The requirement of smoothness of $f$ will be lowered later in  section \ref{Sec::3}.
\begin{definition}[elementary differentials]
	For any populated multi-index $z^\beta \in \mathfrak{M}$ and $y \in \mathbb{R}$, the elementary differential is  given by
	\begin{equs} \label{elementary_differential_ODE}
		\Upsilon_f[z^\beta](y) := \prod_{(i ,k) \in  [0,d] \times \mathbb{N}} \left( f_i^{(k)}(y)\right)^{\beta(i,k)} 
	\end{equs}
	where  $f_i^{(k)}$ is the $k$-th order ordinary derivative of $f_i$. Moreover, we extend the domain of $\Upsilon_f[\cdot]$ to forests, by letting it multiplicative with respect to the forest product of multi-indices, i.e.,
\begin{equs}
	\Upsilon_f[ \prod_{i=1}^{\bullet n} z^{\beta_i}](y):= \prod_{i=1}^n \Upsilon_f[ z^{\beta_i}](y)\, .
\end{equs}
\end{definition}

 In addition to this functional representation $\Upsilon_f[\cdot]\colon  \mathfrak{M} \to \CC^\infty$, we will do a small abuse of notation and  use  $\Upsilon_f[\cdot]$ and its functions to describe a map $\Upsilon_f[\cdot]\colon  \mathfrak{M} \to \mathrm{Vec}(\mathbb{R})$ with $\mathrm{Vec}(\mathbb{R})$  the space of vector fields over $\mathbb{R}$ generated by  $f$, that is the free algebra of operator over $\CC^\infty$ generated by composition $\circ$ of vector fields starting from the  elementary derivations $\Upsilon_f[z_{(i,0)}](\cdot)(y)\colon \CC^\infty\to \CC^\infty$ for $i\in [0,d]$, defined by 
 \[  \Upsilon_f[z_{(i,0)}](\psi)(y)
 = f_i(y)\psi'(y)\,.\]
 \begin{definition}[elementary vector fields]
For any $z^\beta \in \mathfrak{M}$, we define the elementary vector field	$\Upsilon_f[ z^{\beta}] \colon  \CC^\infty \to \CC^\infty$ on any $\psi\in  \CC^\infty$ as the function
\begin{equs}\label{identification_1}
	\Upsilon_f[ z^{\beta}] (\psi)(y) := \Upsilon_f[ z^{\beta}](y) \psi'(y)
\end{equs}
  For generic multi-index forests of multi-indices we set
\begin{equs}\label{identification_2}
	&\Upsilon_f[\emptyset ] (\psi)(y) := \psi(y),
	\\&
	\Upsilon_f[\prod_{i=1}^{\bullet n} z^{\beta_i}] (\psi)(y) := \prod_{i=1}^n\Upsilon_f[ z^{\beta_i}](y)  \psi^{(n)}(y)\,.
\end{equs}
\end{definition}

One can immediately remark that the identity \eqref{identification_1} becomes  \eqref{elementary_differential_ODE} when $\psi = \id$. Both elementary differentials and  elementary vector fields describe the same functions but under different interpretations,  with the second one coming from differential geometry.  In the sequel, we will  use the double bracket notation $\Upsilon_f[\cdot] (\cdot)(\cdot)$ when we consider $\Upsilon_f[\cdot]$ as an element of $\mathrm{Vec}(\mathbb{R})$ and the simpler $\Upsilon_f[\cdot](\cdot)$ when we consider $\Upsilon_f[\cdot]$ as a function.
	As an example, one has
	\begin{equs}
	 \Upsilon_f[z_{(i,k)}](\psi)(y) & = f_i^{(k)}(y) \psi'(y),
	 \\
	 \Upsilon_f[z_{(i,k)}](\id)(y) & =\Upsilon_f[z_{(i,k)}](y) =  f^{(k)}_i(y).
	\end{equs}
	 One can observe that the elementary differential has the following morphism property, which connects the expansions of solutions to the combinatorial objects.
\begin{proposition} \label{morphism_element}
	 The elementary differential is a homomorphism when one defines the product between elementary differentials as the composition, which means for any $u, v \in \CH$ and $y \in \mathbb{R}$
\begin{equs}
		\Upsilon_f[u \star v](\psi) (y)
			= 
			\Upsilon_f[u]  \circ \Upsilon_f[v](\psi) (y)
\end{equs}
where $\circ$ stands for the composition of elementary vector fields in the second input, i.e., $\Upsilon_f[u]  \circ \Upsilon_f[v](\psi) (\cdot) = \Upsilon_f[u]   \left(\Upsilon_f[v](\psi)\right)(\cdot)$.
\end{proposition}
\begin{proof}
	We start with proving that the elementary differential is a pre-Lie morphism. 
	\begin{equs}
		\Upsilon_f[ z^{\beta} \triangleright z^\alpha](\psi)(y) &=  
		\Upsilon_f\left[ z^{\beta} \sum_{(i ,k) \in   [0,d] \times \mathbb{N}} z_{(i,k+1)}\partial_{z_{(i,k)}}  z^\alpha \right](y) \, \psi'(y)
		\\&=
		\Upsilon_f[ z^{\beta}](y) \sum_{(i ,k) \in   [0,d] \times \mathbb{N}} \Upsilon_f\left[ z_{(i,k+1)}\partial_{z_{(i,k)}}  z^\alpha \right](y)  \, \psi'(y).
	\end{equs} From the definition of the elementary differential $ \Upsilon_f[\cdot](\cdot) $, one can see it is multiplicative with respect to the multi-index product. Then 
	\begin{equs}
		\sum_{(i ,k) \in   [0,d] \times \mathbb{N}} \Upsilon_f\left[ z_{(i,k+1)}\partial_{z_{(i,k)}}  z^\alpha \right] (y) & = \sum_{\substack {(i ,k) \in   [0,d] \times \mathbb{N} \\ \alpha(i ,k) \ne 0}} \frac{f_i^{(k+1)} \Upsilon_f[ z^{\alpha}]}{f_i^{(k)}}(y) \\ &  = \partial \Upsilon_f[ z^{\alpha}](y) = \Upsilon_f'[ z^{\alpha}](y),
	\end{equs}
where $\partial$ denote the derivative in $y$. One has 
	\begin{equs}
		\Upsilon_f[ z^{\beta} \triangleright z^\alpha](\psi)(y)& = \Upsilon_f[ z^{\beta}](y) \Upsilon_f'[ z^{\alpha}] (y) \psi'(y)
		\\ &= 
	 \left(\Upsilon_f[ z^{\beta}]   \left(\Upsilon_f[z^{\alpha}]\right)\right)(\psi)(y).
	\end{equs}
	Consequently, one can get the composition by
	\begin{equs}
		\Upsilon_f[ z^{\beta}] \circ \Upsilon_f[z^\alpha](\psi)(y) 
		=\Upsilon_f[ z^{\beta} \triangleright z^\alpha](\psi)(y) +  \Upsilon_f[ z^{\beta} \bullet z^\alpha](\psi)(y).
		\end{equs} 
	Furthermore, by the universal property 
	between the pre-Lie product and its universal enveloping algebra, we can generalise the equality to  $\star$:
 by the Leibniz rule
\begin{equs}
		\Upsilon_f[ \prod_{i=1}^{\bullet n}  z^{\beta_i} \star  z^{\bullet\tilde \alpha}] (\psi)(y) 
		&= \sum_{I \sqcup J = \{1,\ldots,n\}}
		 \Upsilon_f\left[\prod_{i \in I} ^{\bullet }  z^{\beta_i} \bullet \left(\prod_{j \in J} ^{\bullet }z^{\beta_j} \star_2 z^{\bullet \tilde \alpha}\right)\right](\psi)(y)
		\\&=\sum_{I \sqcup J = \{1,\ldots,n\}}  
		\prod_{i=1}^n\Upsilon_f[z^{\beta_i}] (y)
		\Upsilon_f^{(|J|)}[  z^{\bullet\tilde \alpha}](y)
		\psi^{(|I|+\mathrm{card}(\tilde z^{\tilde \alpha}))}(y)
		\\&=
		\prod_{i=1}^n\Upsilon_f[z^{\beta_i}](y) (\prod_{l=1}^{\mathrm{card}(\tilde z^{\tilde \alpha})}\Upsilon_f[z^{ \alpha_l}]	\psi^{(\mathrm{card}( z^{\tilde \alpha}))})^{(n)}(y)
		\\&=
		\Upsilon_f[ \prod_{i=1}^{\bullet n}  z^{\beta_i}] \circ \Upsilon_f[ z^{\bullet \tilde \alpha}](\psi)(y).
	\end{equs} 
		where $\sum_{I \sqcup J = \{1,\ldots,n\}}$ runs over all possible ways to split the index set $\{1,\ldots,n\}$ into to two pieces $I$ and $J$ (they can be empty set) and $|\cdot|$ is the cardinal of the set.
	Finally, the result can be generalised to any element $u,v \in \CH$ as $\star$ and elementary differentials are linear in forests.
\end{proof}
For $u, v \in \CH$ and $ \phi, \psi \in \mathcal{C}^{\infty} $, we set 
\begin{equs}
	\Upsilon_f[ u \otimes v] (\phi \otimes \psi)(\cdot) = 	\Upsilon_f[ u] (\phi ) (\cdot)\Upsilon_f[ v] ( \psi) (\cdot).
\end{equs}
\begin{proposition} \label{Leibniz_identity}
	For any $u \in \CH$ and $\phi, \psi \in \CC^\infty$,  we have 
	\begin{equs}\label{leibniz_id}
		\Upsilon_f[ \Delta_\shuffle u] (\phi \otimes \psi) = \Upsilon_f[u] (\phi \psi).
	\end{equs}
\end{proposition}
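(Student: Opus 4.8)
The plan is to reduce the identity \eqref{leibniz_id} to the case of a single populated multi-indice $z^\beta \in \mathfrak{M}$ and a forest $z^{\bullet\tilde\gamma} \in \mathfrak{F}$, and then to verify both sides by a direct computation using the definitions \eqref{identification_1}, \eqref{identification_2} of the composed elementary differentials together with the recursive definition of $\Delta_\shuffle$. Since $\Upsilon_f$, $\Delta_\shuffle$ and the bilinear pairing of a tensor against $(\phi\otimes\psi)$ are all linear, it suffices to check the identity on a basis of $\CH$, i.e.\ on forests $\CF = \prod_{i=1}^{\bullet n} z^{\beta_i}$.

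First I would handle the base cases. If $\CF = \emptyset$ then $\Delta_\shuffle\emptyset = \emptyset\otimes\emptyset$, and $\Upsilon_f[\emptyset\otimes\emptyset](\phi\otimes\psi) = \Upsilon_f[\emptyset](\phi)\,\Upsilon_f[\emptyset](\psi) = \phi\psi = \Upsilon_f[\emptyset](\phi\psi)$ by \eqref{identification_2}, using the convention that $\Upsilon_f$ applied to a tensor is evaluated componentwise against $\phi\otimes\psi$. If $\CF = z^\beta\in\mathfrak{M}$ then $\Delta_\shuffle z^\beta = \emptyset\otimes z^\beta + z^\beta\otimes\emptyset$, so the left side is
\begin{equs}
\Upsilon_f[\emptyset](\phi)\,\Upsilon_f[z^\beta](\psi) + \Upsilon_f[z^\beta](\phi)\,\Upsilon_f[\emptyset](\psi)
= \phi\,\Upsilon_f[z^\beta]\,\psi' + \Upsilon_f[z^\beta]\,\phi'\,\psi
= \Upsilon_f[z^\beta]\,(\phi\psi)',
\end{equs}
which is $\Upsilon_f[z^\beta](\phi\psi)$ by \eqref{identification_1}; here the key point is just the Leibniz rule $(\phi\psi)' = \phi'\psi + \phi\psi'$ for the product of two scalar functions.

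For the inductive step on a forest $\CF = z^{\beta_1}\bullet\cdots\bullet z^{\beta_n}$ with $n\ge 2$, I would use that $\Delta_\shuffle$ is multiplicative for the forest product, so $\Delta_\shuffle\CF = \prod_{i=1}^{\bullet n}\Delta_\shuffle z^{\beta_i}$, which expands into a sum over subsets $I\sqcup J = \{1,\dots,n\}$ of the tensors $\big(\prod_{i\in I}^{\bullet} z^{\beta_i}\big)\otimes\big(\prod_{j\in J}^{\bullet} z^{\beta_j}\big)$. Applying $\Upsilon_f$ to each leg and using \eqref{identification_2} gives, for each such term, $\big(\prod_{i\in I}\Upsilon_f[z^{\beta_i}]\big)\phi^{(|I|)}\cdot\big(\prod_{j\in J}\Upsilon_f[z^{\beta_j}]\big)\psi^{(|J|)}$; after factoring out $\prod_{i=1}^n\Upsilon_f[z^{\beta_i}]$ the remaining sum is $\sum_{I\sqcup J=\{1,\dots,n\}}\phi^{(|I|)}\psi^{(|J|)} = \sum_{k=0}^n\binom{n}{k}\phi^{(k)}\psi^{(n-k)} = (\phi\psi)^{(n)}$ by the general Leibniz rule. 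This is exactly $\Upsilon_f[\CF](\phi\psi)$ by the second line of \eqref{identification_2}. Finally, linearity extends the identity from forests to all of $u\in\CH$.

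The only mild subtlety — and the step I would be most careful about — is bookkeeping the convention for how $\Upsilon_f$ acts on an element of $\mathfrak{F}\otimes\mathfrak{F}$ paired against $\phi\otimes\psi$: once one fixes that $\Upsilon_f[a\otimes b](\phi\otimes\psi) := \Upsilon_f[a](\phi)\,\Upsilon_f[b](\psi)$ (consistent with the notation already used in Proposition \ref{morphism_element} and in \eqref{key_identities}), everything reduces to the binomial identity for iterated derivatives of a product, so there is no real obstacle.
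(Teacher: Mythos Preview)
Your proposal is correct and follows essentially the same approach as the paper: verify the identity on primitive elements $z^\beta\in\mathfrak{M}$ via the ordinary Leibniz rule, then extend to forests using the multiplicativity of $\Delta_\shuffle$ and linearity. The paper merely sketches the extension step (``primitive elements are generators, prove by induction on the cardinal of the forest''), whereas you carry it out explicitly via the general Leibniz/binomial identity $\sum_k\binom{n}{k}\phi^{(k)}\psi^{(n-k)}=(\phi\psi)^{(n)}$, which is exactly the content of that induction.
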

\begin{proof}
	We only have to prove the identity for the primitive element $z^\beta \in \mathfrak{M}$, which is a direct result of the Leibniz rule. On the left-hand side 
	\begin{equs}
		\Upsilon_f[ \Delta_\shuffle z^{\beta}] (\phi \otimes \psi)(y) &= \Upsilon_f[  z^{\beta} \otimes \emptyset] (\phi \otimes \psi)(y) + \Upsilon_f[ \emptyset \otimes  z^{\beta}] (\phi \otimes \psi)(y)
		\\ & = \psi(y) \Upsilon_f[z^{\beta}](y)  \phi'(y)  + \phi(y) \Upsilon_f[z^{\beta}](y)  \psi'(y)
	\end{equs}
	since $\Upsilon_f[\emptyset] (\phi) = \phi$. On the right-hand side, according to the definition of elementary differentials composing with smooth functions and the Leibniz rule,
	\begin{equs}
		\Upsilon_f[z^{\beta}] (\phi \psi)(y) &= \Upsilon_f[z^{\beta}](y)  (\phi \psi)'(y)
		\\&= \psi(y) \Upsilon_f[z^{\beta}](y) \phi'(y)  + \phi(y) \Upsilon_f[z^{\beta}](y) \psi'(y).
	\end{equs}
	
	Since the primitive elements are ``generators" (one can prove by induction on the cardinal of the forest) and both $\Delta_{\shuffle}$ and elementary differentials are linear in forests of multi-indices, the result can be generalised to any $u \in \CH$.
\end{proof}

\begin{remark}
It is interesting to note that
\begin{equs}
	\Upsilon_f[ \prod_{i=1}^{\bullet n} z^{\beta_i}] (\id)(\cdot) = 0
\end{equs}
for any $n>1$. Therefore for the map $\mathbf{x}\mapsto\Upsilon_f[ \mathbf{x}] (\id)$ defined over $\mathcal{H}^N(\mathfrak{M})\setminus \{\emptyset\}$ acts as a projection on  $ \mathfrak{g}(\mathfrak{M})^N$ for any integer $N\geq 1$. Hence the need of property \eqref{leibniz_id}.

\end{remark}

\section{Solution theory for a multi-index RDE}\label{Sec::3}
We show now how to use the properties of multi-index rough paths and elementary differentials over multi-index to build local and global flow solution for the equation \eqref{RDE} according to the flow approach developed  in \cite{B15,B19} and extended in \cite{KL23} for  rough paths over a Hopf algebra. All through the section we will use the notations $\Delta^2_{T}=\{(s,t)\in [0,T]^2\colon s\leq t\}$, $\Delta_{T}=\{(s,t)\in [0,T]^2\colon s=t\}$ for some generic parameter $T>0$ chosen freely.

\subsection{Solution flow for multi-index RDE}
Instead of formulating \eqref{RDE} as a Cauchy problem we want to state it in terms of a non-autonomous flow with state space $\mathbb{R}$ so that the solution of \eqref{RDE} at some time $t\in [0,T]$ starting from $y_0\in \mathbb{R}$ at time $s\leq t $ will be denoted by $\varphi_{s,t}(y_0)$.

More generally, we say that a map $\varphi\colon \Delta_T^2\times \mathbb{R}\rightarrow \mathbb{R}$,  is a global real flow if it satisfies for any $s\leq r\leq t$
\begin{equation}\label{flow_property}
\varphi_{s,t} = \varphi_{r,t}\circ\varphi_{s,r} \,.
\end{equation}
To take in account phenomena of local existence and uniqueness we introduce the concept of local flow from \cite{KL23}. 
\begin{definition} An open set $O$ such that  $\Delta_T\times \mathbb{R}\subset O \subset \Delta_T^2\times \mathbb{R}$ is said to be an admissible domain  if it satisfies the following properties:
\begin{itemize}
\item for any compact set $K\subset \mathbb{R}$, there exists a value $0<T(K)\le T$ (called non-exploding time) such that $((s,t),x)\in O$ for each $x\in K$ and $|t-s|\le T(K)$;
\item for any $((s,t),x)\in O$ and $s\le r\le t$,  $((s,r),x)\in O$.
\end{itemize}
For any given admissible set $O$ we call any map $\varphi\colon O\rightarrow \mathbb{R}$ a \textbf{local real flow} if it satisfies \eqref{flow_property}   for any  $((s, t), x) \in O$ and $s \leq r \leq t$ such that $((r, t), \varphi_{s,r}(x))\in O$ .  
\end{definition}

As pointed out in the introduction, by performing a formal Taylor expansion of how the solution of \eqref{RDE} should look like, we want to use the approximate expansion \eqref{Butcher_expansion}
for any given multi-index rough path  $\mbX$ of regularity $\gamma\in (0,1)$ and any couple of vector fields $f\in \CC^{\alpha_1}(\mathbb{R}, \mathbb{R}^d  )$, $f_0 \in \mathcal{C}^{\alpha_2}$  for some $\alpha_1>N_{\gamma}-1$ and $ \alpha_2>0$ to actually construct a proper solution.

Using the identification between functions and real vector fields generated by $f$,  we can rewrite for every $y \in \mathbb{R}$  it as 
\begin{equation}\label{approximate_exp2}
\sum_{\substack{ z^{\beta}\in \mathfrak{M}\\0\leq \vert z^{\beta}\vert_{\gamma}\leq N_{\gamma}}}\frac{\Upsilon_f[z^{\beta}](\id)(y)}{S(z^{\beta})} \mbX_{s,t}(z^{\beta}) \,.
\end{equation}
 Replacing now the notion of solution with a flow and localising the values on a compact set we can finally state what is flow solution.

\begin{definition}[Flow solution]
\label{DefnGeneralRDESolution}
 Let  $\mbX$ be a multi-index rough path of regularity $\gamma$ and $f\in \CC^{\alpha_1}(\mathbb{R}, \mathbb{R}^d) $, $f_0 \in \mathcal{C}^{\alpha_2}$  vector fields with  $\alpha_1>N_{\gamma}-1$ and $ \alpha_2>0$. A local real flow $\phi\colon O\to \mathbb{R}$ is said to be a local flow solution of the rough differential equation \eqref{RDE} if there exists a constant $a>1$ independent of $\mbX$ such that for any compact set $K\subset \mathbb{R}$ there exists a positive constant  $c>0$ dependent possibly on $\mbX$  and $K$ such that
\begin{equation}
\label{DefnSolRDEGeneral}
\sup_{y\in K}|\varphi_{s,t}(y)- \sum_{\substack{ z^{\beta}\in \mathfrak{M}\\0\leq \vert z^{\beta}\vert_{\gamma}\leq N_{\gamma}}}\frac{\Upsilon_f[z^{\beta}](\id)(y)}{S(z^{\beta})} \mbX_{s,t}(z^{\beta}) | \leq c\,|t-s|^a
\end{equation}
for all $ s\leq t$ such that $|t-s|\leq T(K) $. If $O=\Delta_T^2\times \mathbb{R}$ with $T(K) =T$ and the estimate \eqref{DefnSolRDEGeneral} is uniform on $K$ we say that  $\phi$ is a global flow solution.
\end{definition}
\begin{remark}\label{remark_simplified}
By looking at the expression \eqref{approximate_exp2}, one realises that
	soon as the multi-index $ z^{\beta} $ contains $z_{(0,k)}$ and extra variables, one has $  |z^{\beta}|_\gamma > N_{\gamma} $. Therefore, expression  \eqref{approximate_exp2} simplifies to 
	\begin{equs}
(t-s) f_0(y)	+	\sum_{\substack{ z^{\beta}\in \mathfrak{M}_0\\0\leq \vert z^{\beta}\vert_\gamma \leq N_{\gamma}}}\frac{\Upsilon_f[z^{\beta}](\id)(y)}{S(z^{\beta})} \mbX_{s,t}( z^{\beta})
	\end{equs}
where $ \mathfrak{M}_0 $ is the set of populated multi-indices in which the frequencies of variable $z_{(0,k)}$ are $0$ for every $k \in \mathbb{N}$. This is enough for the well-posedness to consider a multi-index rough path defined only on these multi-indices. In  Section \ref{Sec::4}, we consider the translation of such a rough path and the correct framework for understanding it is to use this extension. Therefore, we have decided to work directly in this framework from the very beginning.
	\end{remark}

In case of a rough differential equation the construction of a flow is provided by introducing a proper approximation defined using the logarithm of a rough path.


\begin{definition} \label{def_loga_flow}
Let $\mbX$ be a multi-index rough path  of  regularity $\gamma\in (0,1)$ and  $f\in \CC^{\alpha_1}(\mathbb{R}, \mathbb{R}^d  )$, $f_0 \in \mathcal{C}^{\alpha_2}$  for some $\alpha_1>N_{\gamma}$ and $ \alpha_2>1$. For any $y\in \mathbb{R}$ we denote by $O^{\mbX}(y)$ the biggest  subset of $\Delta_T^2$ such that for any $(s,t)\in O(y)$ the following Cauchy problem 
\begin{equation}\label{def_logode}
\left\{\begin{aligned}
&\dot{Z}_r=  \sum_{\substack{ z^{\beta}\in \mathfrak{M}\\1\leq \vert z^{\beta}\vert_{\gamma}\leq N_{\gamma}}}\frac{\Upsilon_f[z^{\beta}](Z_r)}{S(z^{\beta})} \Lambda^{\mbX}_{s,t}( z^\beta)\\&Z_0=y\,
\end{aligned}\right.
\end{equation}
admits a solution  $Z$ defined over $[0,1]$.  Introducing the set 
\[O^{\mbX}= \bigcup_{y\in \mathbb{R}}O^{\mbX}(y)\times \{y\}\subset \Delta_T^2\times \mathbb{R}\]
we call the map $\mu^{\mbX}\colon O^{\mbX}\to \mathbb{R}$ given by $\mu^{\mbX}_{s,t}(y)=Z_1$ the log-ODE almost flow.
\end{definition}

Since the elementary differential $\Upsilon_f[z^{\beta}]$ involves at most $N_{\gamma}-1$ derivative of the vector field $f$ and $\alpha_1>N_{\gamma}$ and in the expression \eqref{def_logode} there is only $f_0$ (no derivatives of $f_0$ involved), see also Remark \ref{remark_simplified}, the ordinary differential equation \eqref{def_logode} involves a space time non-linearity in space at least of class $C^1$ in the space variable therefore locally Lipschitz. For this reason  we can apply all standard result of Cauchy problem with a Local Lipschitz non-linearity obtaining immediately that $O^{\mbX}$ is an admissible set with non-exploding times $T^{\mbX}(K)$ for any compact $K\subset \mathbb{R}$. Moreover, in case  $f\in \CC^{\alpha_1}_b(\mathbb{R}, \mathbb{R}^d  )$, $f_0 \in \mathcal{C}^{\alpha_2}_b$  or they are linear, the differential equation \eqref{def_logode} admits a global solution and one can take $O^{\mbX}= \Delta_T^2\times \mathbb{R}$ with $T^{\mbX}(K)=T$ uniformly over $K$.

The choice of using the logarithm can be related to  the properties in section \ref{Sec::2} and can be used to naturally relate $\mu^{\mbX}$ with the expansion \eqref{approximate_exp2}. 
\begin{proposition}\label{main_prop}
Let $\mbX$ be a multi-index rough path of  regularity $\gamma\in (0,1)$ and  $f\in \CC^{\alpha_1}(\mathbb{R}, \mathbb{R}^d) $, $f_0 \in \mathcal{C}^{\alpha_2}$  vector fields with  $\alpha_1>N_{\gamma}$ and $ \alpha_2>1$.  For any compact set $K\subset \mathbb{R}$ and $\psi\in \mathcal{C}^{\alpha_3}(\mathbb{R})$ with  $\alpha_3>N_{\gamma}+1$ there exists a constant $C>0$ such that one has 
\begin{equation}\label{taylor_exp_flow}
\sup_{y\in K}|\psi\circ \mu^{\mbX}_{s,t}(y)- \sum_{\substack{u\in \mathfrak{F}(\mathfrak{M})\\0\leq \vert u\vert_{\gamma}\leq N_{\gamma}}}\frac{\Upsilon_f[u](\psi)(y)}{S(u)} \mbX_{s,t}(u) | \leq C\,|t-s|^{(N_{\gamma} +1)\gamma}
\end{equation}
for any couple $s\leq t$ such that $|t-s|<T^{\mbX}(K)$. In addition one has the local Lipschitz estimate
\begin{equation}\label{taylor_exp_flow_2}
\begin{split}
&\sup_{z,w\in K}|\psi\circ \mu^{\mbX}_{s,t}(y)-\psi\circ \mu^{\mbX}_{s,t}(w) -\sum_{\substack{u\in \mathfrak{F}(\mathfrak{M})\\0\leq \vert u\vert_{\gamma}\leq N_{\gamma}}}\frac{\Upsilon_f[u](\psi)(y)-\Upsilon_f[u](\psi)(w)}{S(u)} \mbX_{s,t}(u)|\\& \leq C\,|t-s|^{(N_{\gamma} +1)\gamma}|y-w|\,.
\end{split}
\end{equation}
Moreover, in case  $f\in \CC^{\alpha_1}_b(\mathbb{R}, \mathbb{R}^d) $, $f_0 \in \mathcal{C}^{\alpha_2}_b$   or linear and $\psi\in  \CC^{\alpha_3}_b $ all these properties hold independently on the compact set $K$.
\end{proposition}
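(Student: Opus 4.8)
The plan is to prove \eqref{taylor_exp_flow} by a Taylor-expansion-and-resummation argument: expand $\psi \circ \mu^{\mbX}_{s,t}(y) = \psi(Z_1)$ where $Z$ solves the log-ODE \eqref{def_logode}, and match the resulting series against the B-series on the right-hand side using the two algebraic identities from Propositions \ref{morphism_element} and \ref{Leibniz_identity}. Concretely, write $\boldsymbol\Lambda = \boldsymbol\Lambda^{\mbX}_{s,t} \in \mathfrak{g}^{N_\gamma}(\mathfrak{M})$ and note that the vector field driving the ODE is exactly $\sum_{1 \le |z^\beta|_\gamma \le N_\gamma} \frac{\boldsymbol\Lambda(z^\beta)}{S(z^\beta)} \Upsilon_f[z^\beta]$. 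Iterating Taylor's formula for $\psi(Z_1)$ along the flow produces, after collecting terms, the sum over forests $\sum_u \frac{\Upsilon_f[u](\psi)(y)}{S(u)} (\exp_{\star_{N_\gamma}} \boldsymbol\Lambda)(u)$, and $\exp_{\star_{N_\gamma}} \boldsymbol\Lambda = \mbX_{s,t}$ by \eqref{logarithm_rough}. The key point making the resummation work is precisely the morphism property $\Upsilon_f[u \star v](\psi) = \Upsilon_f[u] \circ \Upsilon_f[v](\psi)$ — it converts iterated applications of the ODE vector field into the single coefficient $(\exp_{\star}\boldsymbol\Lambda)(u)$ — together with the Leibniz identity \eqref{leibniz_id}, which is needed to handle the composition with $\psi$ (the outer function) as opposed to with $\id$; see the remark after Proposition \ref{Leibniz_identity}.

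More precisely, I would proceed as follows. First, fix a compact $K$ and work on $|t-s| < T^{\mbX}(K)$, so that $Z_r$ stays in a fixed compact neighbourhood $\tilde K$ for all $r \in [0,1]$; all constants below depend only on $\|f\|_{\alpha_1}, \|f_0\|_{\alpha_2}, \|\psi\|_{\alpha_1}$ over $\tilde K$, on $\|\mbX\|$ (equivalently $\|\boldsymbol\Lambda\|$ by Proposition \ref{prop_estimate}), and on $N_\gamma, d, \gamma$. Second, write a finite Taylor expansion of $r \mapsto \psi(Z_r)$ at $r = 0$ of order $N_\gamma$: each time-derivative $\frac{d}{dr}$ applied to $\Upsilon_f[u](\psi)(Z_r)$ brings down, by the pre-Lie structure encoded in the derivation $D$, a grafting of the driving multi-indices; expanding this out gives the main term $\sum_{0 \le |u|_\gamma \le N_\gamma} \frac{\Upsilon_f[u](\psi)(y)}{S(u)} (\exp^\star_{N_\gamma}\boldsymbol\Lambda)(u)$ plus an explicit integral remainder. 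Third, estimate the remainder: it is a finite sum of terms of the form $\int \cdots \int \Upsilon_f[w](\psi)(Z_\sigma) \, d(\text{monomials in }\boldsymbol\Lambda)$ over forests $w$ with $|w|_\gamma > N_\gamma$; since $|\boldsymbol\Lambda(z^\beta)| \lesssim \|\boldsymbol\Lambda\|^{|\beta|}|t-s|^{|\beta|_\gamma}$ and the number of multi-indices involved is finite, each such term is $\lesssim |t-s|^{|w|_\gamma} \le |t-s|^{(N_\gamma+1)\gamma}$ — here one uses that the smallest $\gamma$-homogeneity exceeding $N_\gamma$ is at least $(N_\gamma+1)\gamma$ for a path of a single component, and more generally that any forest of $\gamma$-homogeneity $> N_\gamma$ has homogeneity $\ge (N_\gamma+1)\gamma$ given that homogeneities are multiples of $\min(\gamma,1) = \gamma$ on the relevant multi-indices. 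The smoothness thresholds $\alpha_1 > 1/\gamma \ge N_\gamma$ and $\alpha_2 > 1$ are exactly what is needed for the $(N_\gamma)$-th order Taylor expansion of the flow and its remainder to make sense (the elementary differentials $\Upsilon_f[z^\beta]$ use at most $N_\gamma - 1$ derivatives of $f$, and differentiating once more along the flow and applying $\psi$ stays within the available regularity).

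For \eqref{taylor_exp_flow_2}, I would run the same expansion for two initial points $y, w \in K$ and subtract. The difference $\psi(Z_1^{y}) - \psi(Z_1^{w})$ and the differences of the B-series terms are controlled by the same computation applied to the difference of the two solutions; standard Gronwall-type estimates for the log-ODE give $\sup_{r}|Z_r^{y} - Z_r^{w}| \lesssim |y - w|$ with a constant depending on the Lipschitz bounds over $\tilde K$, and the remainder terms, being products of a factor that is now a difference (hence carrying $|y-w|$ after a mean-value estimate using the extra available derivative of $f,\psi$) with a factor bounded by $|t-s|^{(N_\gamma+1)\gamma}$, yield the claimed bilinear estimate. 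Finally, the bounded / linear case is immediate: if $f \in \CC^{\alpha_1}_b$, $f_0 \in \CC^{\alpha_2}_b$, $\psi \in \CC^{\alpha}_b$ (or these are linear), then $O^{\mbX} = \Delta_T^2 \times \mathbb{R}$, the solution $Z_r$ never leaves a set on which all the relevant norms are globally bounded (in the linear case one uses a growth estimate rather than boundedness, but the resulting constants are still uniform in $y$), so all constants above are uniform in $K$.

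The main obstacle I expect is the bookkeeping in the second step: organising the iterated time-derivatives of $\psi(Z_r)$ into the sum over forests with the correct symmetry factors $S(u)$, and verifying that the combinatorial coefficient produced by repeatedly applying the ODE vector field and the chain rule is exactly $(\exp^\star_{N_\gamma}\boldsymbol\Lambda)(u)/S(u)$. This is where Propositions \ref{morphism_element} and \ref{Leibniz_identity} do the essential work — the morphism property turns nested compositions $\Upsilon_f[z^{\beta_1}] \circ \cdots \circ \Upsilon_f[z^{\beta_n}]$ into $\Upsilon_f[z^{\beta_1} \star \cdots \star z^{\beta_n}]$, and the Leibniz identity is what lets the outer $\psi$ be carried along correctly rather than only $\id$ — but checking that the scalar coefficients assemble into the exponential (and that truncation at order $N_\gamma$ is harmless modulo the remainder) requires care with the deshuffle coproduct and the definition of $\star$. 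Everything else (the Hölder estimates on $\boldsymbol\Lambda$, the Gronwall argument, the threshold arithmetic on homogeneities) is routine once this identification is in place.
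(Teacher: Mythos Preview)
Your overall strategy coincides with the paper's: expand $\psi(Z_1)$ by iterating the log-ODE $N_\gamma+1$ times, use Proposition~\ref{morphism_element} to convert $\Upsilon_f[z^{\beta_1}]\circ\cdots\circ\Upsilon_f[z^{\beta_k}](\psi)$ into $\Upsilon_f[z^{\beta_1}\star\cdots\star z^{\beta_k}](\psi)$, and recognise the main part as $\sum_u \frac{\Upsilon_f[u](\psi)}{S(u)}\,(\exp^\star_{N_\gamma}\boldsymbol\Lambda)(u)=\sum_u\frac{\Upsilon_f[u](\psi)}{S(u)}\,\mbX_{s,t}(u)$, with remainder controlled through Proposition~\ref{prop_estimate}. (A small aside: Proposition~\ref{Leibniz_identity} is not actually invoked in this proof; the matching of coefficients in the paper's identity \eqref{eq_partial_expansion} uses only the primitivity \eqref{Lie_property} of $\boldsymbol\Lambda$ together with Proposition~\ref{morphism_element}, which is already stated for general $\psi$.)

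There is, however, a genuine gap in your regularity accounting. You assert that ``$\Upsilon_f[z^\beta]$ use at most $N_\gamma-1$ derivatives of $f$, and differentiating once more along the flow $\ldots$ stays within the available regularity''. But the remainder after $N_\gamma+1$ iterations is built from $(V\partial)^{N_\gamma+1}\psi$ with $V=\sum_\beta \frac{\boldsymbol\Lambda(z^\beta)}{S(z^\beta)}\Upsilon_f[z^\beta]$; each further application of $V\partial$ differentiates the coefficients of $V$ once more, so that the remainder involves derivatives of $f$ up to order $(N_\gamma-1)+N_\gamma=2N_\gamma-1$ and derivatives of $\psi$ up to order $N_\gamma+1$. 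Under the sole hypothesis $\alpha_1>1/\gamma$ these objects need not exist, and your ``routine'' remainder estimate cannot be carried out directly. The paper circumvents this by first proving \eqref{taylor_exp_flow}--\eqref{taylor_exp_flow_2} for $f,f_0,\psi\in\CC^\infty$, where all intermediate expressions make sense, and only then passing to $f\in\CC^{\alpha_1}$, $f_0\in\CC^{\alpha_2}$, $\psi\in\CC^{\alpha_1}$ by mollification: both sides of \eqref{taylor_exp_flow} depend only on derivatives of order $\le N_\gamma$ and are continuous under local $\CC^{\alpha_1}$-convergence, so one can take $\varepsilon\to 0$ and argue that the constants $C^\varepsilon$ stay bounded. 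This density step is exactly what the remark following the proof flags (without it one would need $\alpha_1\gtrsim 2/\gamma-1$), and it is missing from your plan.
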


\begin{proof}
Let  $K\subset \mathbb{R}$ be a fixed compact subset. Most of this proof follows from \cite[Lemma 4.7]{KL23}, which we extend to the setting of multi-index.  We will also use the shorthand notation $N_{\gamma}=N$.  Using the explicit form of \eqref{def_logode} and the action of $\Upsilon_f$ on smooth function, we can write  for any couple $s\leq t$ such that $|t-s|<T^{\mbX}(K)$ and any $y\in K$ the simple consequence of the fundamental theory of calculus
\begin{equation}\label{first_expansion}
\begin{split}
\psi\circ \mu^{\mbX}_{s,t}(y)&=  \psi(y)+ \int_0^{1}\sum_{\substack{ z^{\beta}\in \mathfrak{M}\\1\leq \vert z^{\beta}\vert_{\gamma}\leq N}}\frac{\Upsilon_f[z^{\beta}](Z_r)}{S(z^{\beta})}\Lambda^{\mbX}_{s,t}( z^\beta) \psi'(Z_r) \mathrm{d} r\\ &=\psi(y)+ \int_0^{1}\sum_{\substack{ z^{\beta}\in \mathfrak{M}\\1\leq \vert z^{\beta}\vert_{\gamma}\leq N}}\frac{\Upsilon_f[z^{\beta}](\psi)(Z_r)}{S(z^{\beta})} \Lambda^{\mbX}_{s,t}(z^\beta) \mathrm{d} r\,.
\end{split}
\end{equation}
Using the assumptions   $f\in \CC^{\alpha_1}(\mathbb{R}, \mathbb{R}^d) $, $f_0 \in \mathcal{C}^{\alpha_2}$ and $\psi\in \CC^{\alpha_3}$ we iterate the identity  in the following way: for any multi-index $z^{\beta}$ with $|\beta|_{\gamma}=k$ we apply again the fundamental theory of calculus $ N-k+1 $ times over all the functions with at most $N-1$ derivatives over $f$, thereby   obtaining for any integer $2\leq k\leq N+1$ a subset $A_k\subset \{\beta_1\,, \ldots \,,\beta_k\in (\mathfrak{M}^{N})^k\colon  |\beta_1 |_{\gamma}+\cdots +|\beta_k |_{\gamma} > N\}$ such that
\begin{equs}
\, &\psi\circ \mu^{\mbX}_{s,t}(y)=  \sum_{k=0}^{N}\frac{1}{k!}\sum_{\substack{ \beta_1\,, \ldots \,,\beta_k\in \mathfrak{M}^{N}\\ |\beta_1 |_{\gamma}+\cdots +|\beta_k |_{\gamma} \leq N}}  \left(\frac{\Upsilon_f[z^{\beta_1}]}{S(z^{\beta_1})}\circ \cdots \circ \frac{\Upsilon_f[z^{\beta_k}]}{S(z^{\beta_k})}\right)(\psi)(y)\prod_{l=1}^k \Lambda^{\mbX}_{s,t}( z^{\beta_l})\\&+ \sum_{k=2}^{N+1}\int_{\Delta^{k}}\sum_{ \beta_1\,, \ldots ,\beta_{k}\in A_k} \left(\frac{\Upsilon_f[z^{\beta_1}]}{S(z^{\beta_1})}\circ \cdots \circ \frac{\Upsilon_f[z^{\beta_{k}}]}{S(z^{\beta_{k}})}\right)(\psi)(Z_{r_{k}})\prod_{l=1}^{k} \Lambda^{\mbX}_{s,t}(z^{\beta_l}) \mathrm{d} r_{k}\ldots \mathrm{d} r_1\,,
\end{equs}
where $\Delta^{k}$ is the unit simplex of dimension $k$. It is important to remark that the sets $ A_k$  are chosen such that expression make sense for any   $f\in \CC^{\alpha_1}(\mathbb{R}, \mathbb{R}^d) $, $f_0 \in \mathcal{C}^{\alpha_2}$ and $\psi\in \CC^{\alpha_3}$. We can then apply Proposition \ref{morphism_element}  to obtain 
\begin{equation}\label{eq_total_expansion}
\begin{split}
&\psi\circ \mu^{\mbX}_{s,t}(y)=  \sum_{k=0}^{N}\frac{1}{k!}\sum_{\substack{ \beta_1\,, \ldots \,,\beta_k\in \mathfrak{M}^{N}\\ |\beta_1 |_{\gamma}+\cdots +|\beta_k |_{\gamma} \leq N}} \Upsilon_f[z^{\beta_1}\star  \cdots \star z^{\beta_k}](\psi)(y)\prod_{l=1}^k \frac{\Lambda^{\mbX}_{s,t}( z^{\beta_l})}{S(z^{\beta_l})}\\&+\sum_{k=2}^{N+1}\int_{\Delta^{k}}\sum_{ \beta_1\,, \ldots ,\beta_{k}\in A_k} \left(\frac{\Upsilon_f[z^{\beta_1}\star\cdots\star z^{\beta_{k}} ]}{S(z^{\beta_1})\cdots S(z^{\beta_{k}})} \right)(\psi)(Z_{r_{k}})\prod_{l=1}^{k} \Lambda^{\mbX}_{s,t}(z^{\beta_l}) \mathrm{d} r_{k}\ldots \mathrm{d} r_1\,.
\end{split}
\end{equation}
To finally understand the arise of the rough path $\mbX$ in \eqref{taylor_exp_flow} we remark that one has from the duality \eqref{dual_Delta} and \eqref{multipilication} that
\begin{equation}\label{eq_partial_expansion}
\begin{split}
&\sum_{\substack{ \beta_1\,, \ldots \,,\beta_k\in \mathfrak{M}^{N}\\ |\beta_1 |_{\gamma}+\cdots +|\beta_k |_{\gamma} \leq N}} \Upsilon_f[z^{\beta_1}\star  \cdots \star z^{\beta_k}](\psi)(y)\prod_{l=1}^k \frac{\Lambda^{\mbX}_{s,t}( z^{\beta_l})}{S(z^{\beta_l})}\\&= \sum_{\substack{u\in \mathfrak{F}(\mathfrak{M})\\0\leq \vert u\vert_{\gamma}\leq N} }\frac{\Upsilon_f[u](\psi)(y)}{S(u)}(\Lambda^{\mbX}_{s,t})^{\star_N k}(u).
\end{split}
\end{equation}
By simply iterating the property \ref{Lie_property} over $\star$ powers of primitive elements and matching   the symmetry factor of $\langle z^{\beta_1}\star  \cdots \star z^{\beta_k}, u\rangle $. Plugging \eqref{eq_partial_expansion} inside \eqref{eq_total_expansion} one has
\begin{align*}
&\psi\circ \mu^{\mbX}_{s,t}(y)= \sum_{\substack{u\in \mathfrak{F}(\mathfrak{M})\\0\leq \vert u\vert_{\gamma}\leq N} }\frac{\Upsilon_f[u](\psi)(y)}{S(u)}(\exp^{\star}_{N}(\Lambda^{\mbX}_{s,t})) (u)\\&+\sum_{k=2}^{N+1}\int_{\Delta^{k}}\sum_{ \beta_1\,, \ldots ,\beta_{k}\in A_k} \left(\frac{\Upsilon_f[z^{\beta_1}\star\cdots\star z^{\beta_{k}} ]}{S(z^{\beta_1})\cdots S(z^{\beta_{k}})} \right)(\psi)(Z_{r_{k}})\prod_{l=1}^{k} \Lambda^{\mbX}_{s,t}(z^{\beta_l}) \mathrm{d} r_{k}\ldots \mathrm{d} r_1\,\\&=\sum_{\substack{u\in \mathfrak{F}(\mathfrak{M})\\0\leq \vert u\vert_{\gamma}\leq N} }\frac{\Upsilon_f[u](\psi)(y)}{S(u)} \mbX_{s,t}(u) + (I)
\end{align*}
Then the theorem follows by simply remarking   that the term $(I)$ can be easily estimated  using the continuity  of $\psi$ and $f$ together with their derivatives and using the hypothesis $ K$  and $Z([0,1])$ are compact sets. In addition we have as a consequence of Proposition \ref{prop_estimate} the estimate
\[|\Lambda^{\mbX}_{s,t}( z^{\beta_l})|\leq \Vert\Lambda^{\mbX}\Vert^{|\beta_l|}|t-s|^{|\beta_l|_{\gamma}}\]
 from which we deduce \eqref{taylor_exp_flow} using the constraints $|\beta_1 |_{\gamma}+\cdots +|\beta_k |_{\gamma} > N$ given by the sets $A_k$. Passing to the other estimate \eqref{taylor_exp_flow_2}, we can repeat word by word the same identities  and the extra term $|y-w|$ arises because the term $(I) $ involve functions that are local Lipschitz.   The case  $f\in \CC^{\alpha_1}_b(\mathbb{R}, \mathbb{R}^d) $, $f_0 \in \mathcal{C}^{\alpha_2}_b$ or linear and $\psi\in \CC^{\alpha}_b $ follows easily by checking that all the estimate depend on the  norms $\|f\|_{\alpha_1}$ $\|f_0\|_{\alpha_2} $ and $\|\psi\|_{\alpha_3}$.
\end{proof}

\begin{remark}
We note that in this proof we find the smallest  regularity of the vector fields $f_0$ and $f$ to obtain the identity \eqref{eq_total_expansion}. This problem was of course not present in \cite[Lemma 4.7]{KL23} which was formulated in a manifold setting.
\end{remark}
Even if the  log-ODE almost flow \eqref{def_logode} does not have in general an explicit expression, using standard  results  on ordinary differential equations and  the previous property, we can actually establish some key-properties.

\begin{proposition} \label{properties_lamost_flow}
For any multi-index rough path $\mbX$ of  regularity $\gamma\in (0,1)$ and   $f\in \CC^{\alpha_1}(\mathbb{R}, \mathbb{R}^d) $, $f_0 \in \mathcal{C}^{\alpha_2}$  vector fields with  $\alpha_1>N_{\gamma}$ and $ \alpha_2>1$,  the log-ODE almost flow $\mu^{\mbX}$ satisfies following properties:
\begin{itemize}
\item (local Lipschitz continuity) for all compact sets $K\subset \mathbb{R}$  and $\abs{t-s}\le T^{\mbX}(K)$ there exists a  constant $L(K,s,t)>0$ with $L(K,s,s) = 1$ such that 
   \begin{equation*}
            \abs{\mu^{\mbX}_{s,t} (x)-\mu^{\mbX}_{s,t} (y)}\le L(K,s,t)\abs{x-y}\,
        \end{equation*}
for all $x,y\in K$.
\item (H\"older continuity) For all compact sets $K\subset \mathbb{R}$ there exists a constant $B(K)>0$  for which one has
        \begin{equation*}
            \abs{\mu^{\mbX}_{s,t} (x)-x} \le B(K)\abs{t-s}^\gamma\,
        \end{equation*}
for all $x\in K$.
\item (almost-flow property) For all compact set $K\subset \mathbb{R}$ there exists a constant $C(K)>0$ for which one has 
\begin{equation*}
            \abs{\mu^{\mbX}_{s,t}(x)-\mu^{\mbX}_{r,t}\circ\mu^{\mbX}_{s,r} (x)}\le C(K)\abs{t-s}^{(N_{\gamma} +1)\gamma}\,.
 \end{equation*} 
         \begin{equation*}
            \abs{(\mu^{\mbX}_{s,t}-\mu^{\mbX}_{r,t}\circ\mu^{\mbX}_{s,r})(x)-(\mu^{\mbX}_{s,t}-\mu^{\mbX}_{r,t}\circ\mu^{\mbX}_{s,r})(y)} \le C(K) \abs{t-s}^{(N_{\gamma} +1)\gamma}\abs{x-y}
        \end{equation*}
        for all $x, y\in K$, $\abs{t-s}\le T^{\mbX}(K)$ and all   $((r,t),\mu^{\mbX}_{r,s}(x))$, $((r,t),\mu^{\mbX}_{r,s} (y))\in O^{\mbX}$.
\end{itemize}
Moreover, in case  $f\in \CC^{\alpha_1}_b(\mathbb{R}, \mathbb{R}^d) $, $f_0 \in \mathcal{C}^{\alpha_2}_b$   or linear all these properties hold independently on the compact set $K$.
\end{proposition}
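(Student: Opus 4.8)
The plan is to deduce the three properties of $\mu^{\mbX}$ from the Taylor--type estimates \eqref{taylor_exp_flow}--\eqref{taylor_exp_flow_2} of Proposition~\ref{main_prop}, together with elementary ODE theory, the Chen relation \eqref{eq:chen} and the morphism property of Proposition~\ref{morphism_element}. For a fixed compact $K\subset\mathbb{R}$ I freely shrink $T^{\mbX}(K)$ so that $T^{\mbX}(K)\le 1$ and the solutions of \eqref{def_logode} started in $K$ stay inside a fixed larger compact $K'$ for all $r\in[0,1]$; this bookkeeping with the admissible domain $O^{\mbX}$ is routine and I suppress it. Write $G_{s,t}(z)=\sum_{1\le|z^\beta|_\gamma\le N_{\gamma}}S(z^\beta)^{-1}\Upsilon_f[z^\beta](z)\,\Lambda^{\mbX}_{s,t}(z^\beta)$ for the vector field driving \eqref{def_logode}. \emph{H\"older continuity}: integrating \eqref{def_logode} gives $\mu^{\mbX}_{s,t}(x)-x=\int_0^1 G_{s,t}(Z_r)\,\mathrm{d}r$; by Proposition~\ref{prop_estimate} each $|\Lambda^{\mbX}_{s,t}(z^\beta)|\lesssim|t-s|^{|z^\beta|_\gamma}$ with $|z^\beta|_\gamma\ge 1$, and the finitely many $\Upsilon_f[z^\beta]$ are bounded on $K'$, so $\sup_{K'}|G_{s,t}|\lesssim_K|t-s|$ and hence $|\mu^{\mbX}_{s,t}(x)-x|\lesssim_K|t-s|\le|t-s|^\gamma$ since $|t-s|\le 1$, i.e. the claim with $B(K)$. \emph{Local Lipschitz continuity}: $G_{s,t}$ is Lipschitz on $K'$ with constant $\lesssim_K|t-s|$ (here one uses $\alpha_1>1/\gamma\ge N_{\gamma}$, so $f\in C^{N_{\gamma}}$ and each $\Upsilon_f[z^\beta]$, involving derivatives of $f$ of order $\le N_{\gamma}-1$, is locally Lipschitz); applying Gr\"onwall to the difference of the two solutions of \eqref{def_logode} started at $x,y\in K$ gives $|\mu^{\mbX}_{s,t}(x)-\mu^{\mbX}_{s,t}(y)|\le e^{c(K)|t-s|}|x-y|$, and $L(K,s,t):=e^{c(K)|t-s|}$ satisfies $L(K,s,s)=1$.

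\emph{Almost-flow property.} This is the substantial point, and I prove it first for $f,f_0\in\CC^\infty$, recovering the general case by mollification (with constants uniform in the mollification parameter) exactly as at the end of the proof of Proposition~\ref{main_prop}; so assume $f,f_0\in\CC^\infty$. Applying \eqref{taylor_exp_flow} with $\psi=\id$ at $x\in K$ gives
\[
\mu^{\mbX}_{s,t}(x)=\sum_{0\le|u|_\gamma\le N_{\gamma}}\frac{\Upsilon_f[u](\id)(x)}{S(u)}\,\mbX_{s,t}(u)+O\big(|t-s|^{(N_{\gamma}+1)\gamma}\big).
\]
For the composition, set $y:=\mu^{\mbX}_{s,r}(x)\in K'$ by the H\"older estimate. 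Applying \eqref{taylor_exp_flow} with $\psi=\id$ at $y$, only $w=\emptyset$ and single multi-indices survive ($\Upsilon_f[w](\id)=0$ for larger forests), so $\mu^{\mbX}_{r,t}(y)=\sum_w S(w)^{-1}\Upsilon_f[w](\id)(y)\,\mbX_{r,t}(w)+O(|t-r|^{(N_{\gamma}+1)\gamma})$. For each such $w$ the function $\Upsilon_f[w](\id)$ is again $\CC^\infty$, so \eqref{taylor_exp_flow} applied to it at $x$ yields $\Upsilon_f[w](\id)(y)=\sum_v S(v)^{-1}\Upsilon_f[v]\big(\Upsilon_f[w](\id)\big)(x)\,\mbX_{s,r}(v)+O(|r-s|^{(N_{\gamma}+1)\gamma})$, and Proposition~\ref{morphism_element} applied with the function $\id$ identifies $\Upsilon_f[v]\big(\Upsilon_f[w](\id)\big)=\Upsilon_f[v\star w](\id)$. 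Substituting and absorbing all remainders via $|\mbX_{r,t}(w)|\lesssim|t-s|^{|w|_\gamma}$, one obtains
\[
\mu^{\mbX}_{r,t}\circ\mu^{\mbX}_{s,r}(x)=\sum_{|v|_\gamma,|w|_\gamma\le N_{\gamma}}\frac{\Upsilon_f[v\star w](\id)(x)}{S(v)S(w)}\,\mbX_{s,r}(v)\,\mbX_{r,t}(w)+O\big(|t-s|^{(N_{\gamma}+1)\gamma}\big).
\]

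\emph{Matching the leading terms.} By the Chen relation \eqref{eq:chen}, $\mbX_{s,t}=\mbX_{s,r}\star_{N_{\gamma}}\mbX_{r,t}$, and using the adjunction \eqref{dual_Delta} between $\star$ and $\Delta$ with the pairing \eqref{eq:pairing} one checks that the leading term of $\mu^{\mbX}_{s,t}(x)$ equals $\sum_{|v|_\gamma+|w|_\gamma\le N_{\gamma}}S(v)^{-1}S(w)^{-1}\Upsilon_f[v\star w](\id)(x)\,\mbX_{s,r}(v)\,\mbX_{r,t}(w)$; here one uses that both $\star$ and $\Delta$ are graded by $|\cdot|_\gamma$, since the grafting operator $D$ preserves the $\gamma$-grading. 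The difference between this and the leading term of the composition is the sum over $|v|_\gamma,|w|_\gamma\le N_{\gamma}$ with $|v|_\gamma+|w|_\gamma>N_{\gamma}$; since a single type-$0$ variable has $\gamma$-norm $1/\gamma$, which exceeds $N_{\gamma}$ unless $1/\gamma$ is an integer, every forest of $\gamma$-norm $\le N_{\gamma}$ has \emph{integer} $\gamma$-norm, so $|v|_\gamma+|w|_\gamma>N_{\gamma}$ forces $|v|_\gamma+|w|_\gamma\ge N_{\gamma}+1$ and each such term is $O(|t-s|^{N_{\gamma}+1})=O(|t-s|^{(N_{\gamma}+1)\gamma})$. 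Subtracting the two expansions gives the first almost-flow estimate. The Lipschitz-in-space version follows by repeating the three expansions with \eqref{taylor_exp_flow_2} instead of \eqref{taylor_exp_flow} (using the already established local Lipschitz bound to compare $\mu^{\mbX}_{s,r}(x)$ and $\mu^{\mbX}_{s,r}(y)$ before invoking \eqref{taylor_exp_flow_2}), which simply produces the extra factor $|x-y|$.

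\emph{Globalisation and the main obstacle.} All constants above depend only on $K$, on $\Vert\mbX\Vert$, and on the relevant $\CC^{\alpha_1},\CC^{\alpha_2}$ norms of $f,f_0$; when $f\in\CC^{\alpha_1}_b$, $f_0\in\CC^{\alpha_2}_b$ or they are linear, these bounds (respectively, the uniform control of the solutions of \eqref{def_logode}) make every estimate uniform over $K$, yielding the global statements. The one genuine difficulty is the regularity mismatch in the almost-flow step: the estimate \eqref{taylor_exp_flow} has to be used with the test function $\psi=\Upsilon_f[w](\id)$, which for $f$ merely in $\CC^{\alpha_1}$ need not belong to $\CC^{\alpha_1}$ when $N_{\gamma}\ge 2$; this is exactly why the argument must be carried out first for $\CC^\infty$ data and then transported by density, mirroring Proposition~\ref{main_prop}. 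The remaining work — estimating the remainder terms on the compact sets reached by the flow, and the bookkeeping with $O^{\mbX}$ and $T^{\mbX}(K)$ ensuring that the compositions $\mu^{\mbX}_{r,t}\circ\mu^{\mbX}_{s,r}$ are well-defined — is routine.
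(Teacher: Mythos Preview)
Your proof is correct and follows essentially the same strategy as the paper: Gr\"onwall/elementary ODE theory for the Lipschitz and H\"older bounds, and the expansion of Proposition~\ref{main_prop} together with the morphism identity (Proposition~\ref{morphism_element}) and Chen's relation \eqref{eq:chen} for the almost-flow property. The only noteworthy difference is in how you handle the regularity of $\psi=\Upsilon_f[z^{\beta}](\id)$ in the inner expansion: you pass through $\CC^\infty$ data and mollify, whereas the paper instead applies a \emph{truncated} version of \eqref{taylor_exp_flow} at level $N_\gamma-|\beta|_\gamma$ (for which the reduced regularity of $\Upsilon_f[z^{\beta}](\id)$ is exactly sufficient), though the paper does not spell out this refinement of Proposition~\ref{main_prop}.
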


\begin{proof}
The local Lipschitz continuity follows from  the  local Lipschitz continuity of the log-ODE almost flow. Concerning the local H\"older continuity this follows immediately from the identity \eqref{first_expansion} with $\psi=\id$ and doing a simple estimation over powers of $|t-s|$. Concerning the almost-flow property, it is a simple consequence of Proposition \ref{main_prop} and the identity \eqref{eq:chen}. Indeed for all $x\in K$, $\abs{t-s}\le T^{\mbX}(K)$ and  $((r,t),\mu^{\mbX}_{r,s}(x))\in O^{\mbX}$ using the shorthand notation $N=N_{\gamma}$ we simply add and subtract the right quantities to obtain 

\begin{align}\label{eq_first_expansion}\nonumber
&\mu^{\mbX}_{r,t}\circ\mu^{\mbX}_{s,r} (x)=\\& \sum_{\substack{ z^{\beta}\in \mathfrak{M}\\0\leq \vert z^{\beta}\vert_{\gamma}\leq N}}\sum_{\substack{u\in \mathfrak{F}(\mathfrak{M})\\0\leq \vert u\vert_{\gamma}\leq N- |\beta|_{\gamma}}}\frac{\Upsilon_f[u](\Upsilon_f[z^{\beta}](\id))(x)}{S(u)S(z^{\beta})} \mbX_{s,r}(u)  \mbX_{r,t}( z^{\beta})  + (I)+ (II)
\end{align}
with the remainder terms
\[(I)= \mu^{\mbX}_{r,t}(\mu^{\mbX}_{s,r} (x))- \sum_{\substack{ z^{\beta}\in \mathfrak{M}\\0\leq \vert z^{\beta}\vert_{\gamma}\leq N}}\frac{\Upsilon_f[z^{\beta}](\id)(\mu^{\mbX}_{s,r} (x))}{S(z^{\beta})}\mbX_{r,t}( z^{\beta}) \]
\begin{align*}
(II)&=\sum_{\substack{ z^{\beta}\in \mathfrak{M}\\0\leq \vert z^{\beta}\vert_{\gamma}\leq N}}\frac{\Upsilon_f[z^{\beta}](\id)(\mu^{\mbX}_{s,r} (x))}{S(z^{\beta})}\mbX_{r,t}( z^{\beta})\\&- \sum_{\substack{ z^{\beta}\in \mathfrak{M}\\0\leq \vert z^{\beta}\vert_{\gamma}\leq N}}\sum_{\substack{u\in \mathfrak{F}(\mathfrak{M})\\0\leq \vert u\vert_{\gamma}\leq N- |\beta|_{\gamma}}}\frac{\Upsilon_f[u](\Upsilon_f[z^{\beta}](\id))(x)}{S(u)S(z^{\beta})} \mbX_{s,r}(u)  \mbX_{r,t}( z^{\beta})\,.
\end{align*}
Before focusing on the remainder terms let us show how to conclude from \eqref{eq_first_expansion}. By simply applying the composition property \eqref{morphism_element} we can rewrite the last term as 
\begin{equs}
& \sum_{\substack{ z^{\beta}\in \mathfrak{M}, u\in \mathfrak{F}(\mathfrak{M})\\0\leq |\beta|_{\gamma}+ |u|_{\gamma}\leq N}}\frac{\Upsilon_f[u\star z^{\beta}](\id)(x)}{S(u)S(z^{\beta})} \mbX_{s,r}(u) \mbX_{r,t}( z^{\beta}) \\& =\sum_{\substack{ z^{\beta}\in \mathfrak{M}\\0\leq \vert z^{\beta}\vert_{\gamma}\leq N}}\frac{\Upsilon_f[z^{\beta}](\id)(x)}{S(z^{\beta})} (\mbX_{s,r}\star_N\mbX_{r,t})( z^{\beta}) \\&=\mu^{\mbX}_{s,t}(x)+ \sum_{\substack{ z^{\beta}\in \mathfrak{M}\\0\leq \vert z^{\beta}\vert_{\gamma}\leq N}}\frac{\Upsilon_f[z^{\beta}](\id)(x)}{S(z^{\beta})} \mbX_{s,t}( z^{\beta})- \mu^{\mbX}_{s,t}(x)=\mu^{\mbX}_{s,t}(x)+ (III)
\end{equs}
where the third line follows from the simple fact that $\Upsilon_f[u](\id)=0$ for any $u\in \mathfrak{F}(\mathfrak{M})$ such that $u=u_1\bullet u_2
$. Then the remainder $(III)$ satisfies $|(III)|\leq C(K)\abs{t-s}^{(N_{\gamma} +1)\gamma}$ as consequence of \eqref{taylor_exp_flow}. Passing to the remainders $(I) $ and $(II)$, the first one satisfies $|(I)|\leq C(K)\abs{t-s}^{(N_{\gamma} +1)\gamma}$ directly  from \eqref{taylor_exp_flow}. In case of remainder $(II)$ we can  check from Proposition \ref{main_prop} that we can easily prove for any $z^{\beta}\in \mathfrak{M}$ the estimate
\begin{equs} |\Upsilon_f[z^{\beta}](\id)(\mu^{\mbX}_{s,r} (x))  & -\sum_{\substack{u\in \mathfrak{F}(\mathfrak{M})\\0\leq \vert u\vert_{\gamma}\leq N- |\beta|_{\gamma}}}\frac{\Upsilon_f[u](\Upsilon_f[z^{\beta}](\id))(x)}{S(u)} \mbX_{s,r}(u) |\\ & \leq C(K)|u-s|^{N- |\beta|_{\gamma} +1}
\end{equs}
for some $C(K)>0$. Therefore we can conclude there exists a constant $C'(K)>0$ such that
\begin{align*}
&|(II)|\leq \sum_{\substack{ z^{\beta}\in \mathfrak{M}\\0\leq \vert z^{\beta}\vert_{\gamma}\leq N}}C(K)|r-s|^{N- |\beta|_{\gamma} +1}\left|\frac{\mbX_{r,t}( z^{\beta})}{S(z^{\beta})}\right|\\&\leq \sum_{\substack{ z^{\beta}\in \mathfrak{M}\\0\leq \vert z^{\beta}\vert_{\gamma}\leq N}}C(K)|r-s|^{N- |\beta|_{\gamma} +1}\Vert\mbX\Vert^{|\beta|}|t-r|^{|\beta|_{\gamma}}\leq C'(K) |t-s|^{N +1}\,.
\end{align*}
To obtain the second part of the almost-flow property, we can easily repeat the same computations and apply the second estimate \eqref{taylor_exp_flow_2} instead. We finally remark that when   $f\in \CC^{\alpha_1}_b(\mathbb{R}, \mathbb{R}^d) $, $f_0 \in \mathcal{C}^{\alpha_2}_b$   the uniformity over the compact set holds as a consequence of Proposition \ref{main_prop}.
\end{proof}

Using the previous properties of the log-ODE almost flow we can then apply the sewing lemma for flows, see \cite{KL23,B15} to construct a unique local flow solution.

\begin{theorem}\label{thm_well-posedness}
For any multi-index rough path $\mbX$ of  regularity $\gamma\in (0,1)$ and  $f\in \CC^{\alpha_1}(\mathbb{R}, \mathbb{R}^d) $, $f_0 \in \mathcal{C}^{\alpha_2}$  vector fields with  $\alpha_1>N_{\gamma}$ and $ \alpha_2>1$ there exists a unique local flow solution of the rough differential equation \eqref{RDE}. Moreover in case  $f\in \CC^{\alpha_1}_b(\mathbb{R}, \mathbb{R}^d) $, $f_0 \in \mathcal{C}^{\alpha_2}_b$ or linear   there exists a unique global flow solution of the rough differential equation \eqref{RDE}.
\end{theorem}
\begin{proof}
The theorem follows by applying the sewing lemma for local almost-flows, see \cite[Lemma 2.7]{KL23} at the log-ODE almost flow. Indeed  the properties listed in Proposition \ref{properties_lamost_flow} are the hypothesis for the sewing lemma for local almost-flow, which imply the existence of an admissible set  $\hat{O}^{\mbX}\subset O^{\mbX}$ with explosion times $\hat{T}(K)$ and the existence and uniqueness of  a local real flow $\phi\colon \hat{O}^{\mbX}\to \mathbb{R}$ satisfying for any compact set $K\subset\mathbb{R}$ the following properties
  \begin{align}
            \sup_{x\in K}\abs{\phi_{s,t} (x)-\mu^{\mbX}_{s,t} (x)}&\le C(K)\,|t-s|^{(N_{\gamma} +1)\gamma}\,\label{local_flow_approx}\\
          \sup_{x\in K}\abs{\phi_{s,t} (x)-x}&\le B(K)\,|t-s|^{\gamma}\,\label{local_flow_approx_2}\\    \sup_{x,y\in K}\abs{\phi_{s,t} (x)-\phi_{s,t} (y)}&\le L(K)\,|x-y|\, \label{local_flow_approx_3}
        \end{align}
       for some constant $C(K),B(K), L(K)>0$. The resulting flow satisfies the property \eqref{DefnSolRDEGeneral} as a trivial consequence of 
       Proposition \ref{main_prop} combined with \eqref{local_flow_approx}. Moreover, one can see that supposing  the property \eqref{DefnSolRDEGeneral}  is 
       true for some local real flow, we can derive the conditions \eqref{local_flow_approx} and \eqref{local_flow_approx_2} which guarantee the uniqueness of a local flow via the uniqueness of the sewing lemma. When $f\in \CC^{\alpha_1}_b(\mathbb{R}, \mathbb{R}^d) $, $f_0 \in \mathcal{C}^{\alpha_2}_b$ or linear then the  properties of Proposition \ref{properties_lamost_flow} hold uniformly over the compact set and  we can apply to the log-ODE almost flow the sewing lemma for flows from \cite[Theorem 2.1]{B15} from which we obtain  the properties \eqref{local_flow_approx} and \eqref{local_flow_approx_2} and \eqref{local_flow_approx_3} uniformly in  $K$.  Hence, we have the existence and uniqueness of a global flow solution.
\end{proof}
\begin{remark}
Thanks to the local Lipschitz property \eqref{local_flow_approx_3} it possible to check that the  resulting flow  takes values in the space of locally  Lipschitz continuous homeomorphisms of $\mathbb{R}$ and depends continuously on the underlying rough path $\mathbf{X}$ from standard continuity result of the sewing lemma for flows, by using the  the metric $d(\mbX,\mathbf{Z})=\Vert\mbX-\mathbf{Z}\Vert $, with $\Vert \cdot \Vert$ from \eqref{defn_Norm}, see e.g. \cite[Theorem 3.8]{B15}.
\end{remark}

\subsection{Davie solution for a multi-index RDE}
Once the existence and uniqueness of a flow solution for the equation \eqref{RDE} has been obtained, we can easily relate it with a path solution solving \eqref{RDE} as a Davie solution, see \cite{Davie} and provide an equivalent formulation. This phenomenon holds true for rough differential equation driven by a both a geometric and branched rough path, see \cite{B21}. In this part  we will briefly explain how to adapt these results for multi-index rough paths.

To understand how to define a Davie solution we simply go back to the expansion \eqref{approximate_exp2} and we use it into its original version of Taylor expansion for any solution.

\begin{definition}[Davie solution] \label{Davie_solution}
 Let  $\mbX$ be a multi-index rough path of regularity $\gamma$ and $f\in \CC^{\alpha_1}(\mathbb{R}, \mathbb{R}^d) $, $f_0 \in \mathcal{C}^{\alpha_2}$  vector fields with  $\alpha_1>N_{\gamma}-1$ and $ \alpha_2>0$.  For any given initial datum  $y_0\in \mathbb{R}$ a path $Y\colon [0, \tau]\to \mathbb{R}$ is said a local Davie solution of  the rough differential equation \eqref{RDE} if $Y_0=y_0$ and there exists a constant $a>1$ independent of $\mbX$ and a positive constant  $c>0$ dependent possibly on $\mbX$ and $f$ such that
\begin{equation}
\label{DefnSolRDEGeneral_davie}
|Y_t- Y_s- \sum_{\substack{ z^{\beta}\in \mathfrak{M}\\1\leq \vert z^{\beta}\vert_{\gamma}\leq N_{\gamma}}}\frac{\Upsilon_f[z^{\beta}](Y_s)}{S(z^{\beta})} \mbX_{s,t}( z^{\beta})| \leq c\,|t-s|^a
\end{equation}
for all $ 0\leq s\leq t\leq \tau$. If  $\tau =T$  we say that  $z$ is a global Davie solution.

\end{definition}
From this definition we can easily relate the existence and uniqueness of Theorem \ref{thm_well-posedness} into this context.

\begin{theorem} \label{prop_Davie}
For any multi-index rough path $\mbX$ of  regularity $\gamma\in (0,1)$ and  $f\in \CC^{\alpha_1}(\mathbb{R}, \mathbb{R}^d) $, $f_0 \in \mathcal{C}^{\alpha_2}$  vector fields with  $\alpha_1>N_{\gamma}$ and $ \alpha_2>1$  there exists a unique local Davie solution  of the rough differential equation \eqref{RDE}. Moreover in case  $f\in \CC^{\alpha_1}_b(\mathbb{R}, \mathbb{R}^d) $, $f_0 \in \mathcal{C}^{\alpha_2}_b$ or linear there exists a unique global Davie solution of the rough differential equation \eqref{RDE}.

\end{theorem}

\begin{proof}
In order to prove this result we first show that for any given flow solution $\varphi$ one can define a Davie solution  $Y$. Given a flow solution $\varphi$ we can simply check that for any $y_0\in \mathbb{R}$ the path $Y_t=\varphi_{0,t}(y_0)$ defined for $t\leq T(\{y_0\})$ is indeed a path starting from $y_0$ for which for any $0\leq s\leq t\leq \tau$ one has 
\begin{align*}
Y_t=\varphi_{s,t}(\varphi_{0,s}(y_0))&=\sum_{\substack{ z^{\beta}\in \mathfrak{M}\\0\leq \vert z^{\beta}\vert_{\gamma}\leq N}}\frac{\Upsilon_f[z^{\beta}](\id)(\varphi_{0,s}(y_0))}{S(z^{\beta})} \langle\mbX_{s,t}, z^{\beta}\rangle+O(|t-s|^a)\\&= Y_s+\sum_{\substack{ z^{\beta}\in \mathfrak{M}\\1\leq \vert z^{\beta}\vert_{\gamma}\leq N}}\frac{\Upsilon_f[z^{\beta}](Y_s)}{S(z^{\beta})} \langle\mbX_{s,t}, z^{\beta}\rangle+O(|t-s|^a)\,.
\end{align*}
Conversely, supposing the existence of a Davie solution for any $y_0$ we can easily show that this Davie solution coincides with the local  solution  flow $\varphi_{0,t}(y_0)$ for $t\leq T(\{y_0\})\wedge\tau= \tau_1$, by using the same uniqueness argument of \cite[Theorem 5.2]{B15}. Using the local Lipschitz property of $\varphi$
for any value $\delta$ and any integer $k\leq \tau_1/\delta$ one has
\begin{equation*}
\begin{split}
Y_{k\delta} &= \varphi_{(k-1)\delta,\delta k}(Y_{(k-1)\delta}) + O\big(\delta^a\big) \\                                  &= \varphi_{(k-1)\delta,\delta k}\left(\varphi_{(k-2)\delta,(k-1)\delta}(Y_{(k-2)\delta})+ O\big(\delta^a\big) \right) + O\big(\delta^a\big) \\&=\varphi_{(k-2)\delta,\delta k}(Y_{(k-2)\delta})+ L(\{y_0\})O\big(\delta^a\big)   O\big(\delta^a\big)\,.
\end{split}
\end{equation*}
Iterating these estimate up to reach zero one has
\begin{equation*}
\begin{split}
Y_{k\delta} &= \varphi_{0,\delta k}(y_0) + L(\{y_0\})O\big(k\delta^a\big)  + O\big(\delta^a\big)\,.
\end{split}
\end{equation*}
By choosing now a sequence $\delta_n$ and $k_n$ such that $\delta_nk_n\to t$ with $t\leq\tau_1$ one has $Y_{t} = \varphi_{0,t}(y_0)$ by continuity.
\end{proof}
\section{Translation and action on RDE}\label{Sec::4}
The translation of rough paths is an algebraic deformation of rough paths. Translations in geometric and branched rough paths have been studied in \cite{BCFP} and the case of ``smooth rough paths" has been explored by \cite{Bel22}.
One of the main results in these works is that a rough differential equation driven by translated rough paths admits the same solution as the original equation with a corresponding translation in its nonlinearity $f$. 
This result can be applied to obtain a conversion between Itô and Stratonovich solutions and to the renomalisation of rough paths where some iterated integrals diverge (see \cite{BCF}).
The similar structure can be also found in \cite{BHZ} which introduces the renormalisation of singular SPDEs in regularity structures.

In this section, we discuss the translation of multi-indices rough paths and derive the same results as those of geometric and branched rough paths. In particular, we will analyse the effect of the translation on the solutions of RDEs.

\subsection{Translations and the simultaneous insertion product}
Suppose $\{e_0, \ldots e_d\}$ is  an orthonormal basis of $\mathbb{R}^{d+1}$ and 
$X_t $ is a $d+1$- dimensional path with and $X^0_t = t$, i.e.,
\begin{equs}
	X_t = \sum_{i=0}^{d} X^i_te_i = te_0 + \sum_{i=1}^{d} X^i_te_i.
\end{equs}
We define a translation $T_v: \mathbb{R}^{d+1} \rightarrow \mathbb{R}^{d+1} $ of the path $X_t$ as
\begin{equs}
	T_v X_t :=  X_t +v = \sum_{i=0}^{d} (X_t^i + \sum_{j=0}^{d} X_t^j v_j^i) e_i,  \quad \text{with } v_j = \sum_{i=0}^{d} v_j^ie_i \in \mathbb{R}^{d+1}.
\end{equs}
This can be considered as a perturbation of signals $X_t^i$.  Moreover, since
\begin{equs}
	X_t +v = \sum_{j=0}^{d} X_t^j (e_j + \sum_{i=0}^{d} v_j^ie_i),
\end{equs}
the translation is equivalent to endomorphisms of the vector space $\mathbb{R}^{d+1}$:
\begin{equs}
	e_j \mapsto e_j + v_j, \quad j=0,\ldots,d.
\end{equs}

Now, we would like to study how the translation in signals will affect multi-indices rough paths lifted from them.
Notice that we can identify the vector space of $\mathbb{R}^{d+1}$ with the space $\CH_{(1)}$ since $\{z_{(i,0)}\}_{i = 0, \ldots,d}$ is a natural basis of $\CH_{(1)}$. Therefore, one can define a translation $T_{\ell}: \CH_{(1)} \rightarrow \mathfrak{g}(\CH)$ where $\mathfrak{g}(\CH)$ is the set of primitive elements of $\CH$. 
We firstly define the translation of the basis as below and then take the linear extension. For any $z_{(i,0)} \in \CH_{(1)}$ and $i = 0, \ldots,d$, define the linear map 
	\begin{equs} \label{eq:signle_translation}
		T_{\ell}( z_{(i,0)} ) := \sum_{ z^{\alpha} 
			\in \mathfrak{M}} \frac{\ell_i(z^{\alpha})}{S(z^{\alpha})} z^\alpha,
	\end{equs}
where $ \ell_i : \mathfrak{g}(\CH) \rightarrow \mathbb{R}$ is a forest product-preserving (i.e., $\ell_i (\prod^\bullet_j z^{\beta_j}) = \prod_j \ell_i (z^{\beta_j})$) linear character  and it describes the translation in the direction $i$. To examine the translation effect on high-order terms of a rough path $\mathbf{X}$, 
one can extend \eqref{eq:signle_translation} to $T_{\ell}: \CH \rightarrow \CH$
by defining, for any $z^\beta \in \mathfrak{M}$,
\begin{equs} \label{general_tanslation_map}
	\begin{aligned}
	&\hat T_{\ell}( z_{(i,k)} ) = \sum_{ z^{\alpha} 
		\in \mathfrak{M}} \frac{\ell_i(z^{\alpha})}{S(z^{\alpha})} D^k z^\alpha,
	\quad
	T_{\ell}( z^\beta) = \prod_{(i,k) \in  [0,d] \times \mathbb{N}} \left(\hat T_{\ell}( z_{(i,k)} )\right)^{\beta(i,k)},
	\\&
	T_{\ell}( z^\alpha \bullet z^\beta) = T_{\ell}( z^\alpha) \bullet T_{\ell}( z^\beta)
	\end{aligned}
\end{equs}
 where the product $\prod_{(i,k) \in  [0,d] \times \mathbb{N}}$ is a multi-index product and $ \ell_i : \CH \rightarrow \mathbb{R}$ is a linear character preserving the forest product. One can check that, because of the $D^k$,  $T_{\ell}$ maps (linear span of) populated multi-indices to (linear span of) populated multi-indices. The map $T_{\ell}$
is also a morphism for $\triangleright$, i.e.,  for any $z^\alpha, z^\beta \in \mathfrak{M}$:
\begin{equs}\label{eq:morphism_insertion}
	\begin{aligned}
	T_{\ell}( z^\beta \triangleright z^\alpha) & = T_{\ell}( z^\beta) \triangleright T_{\ell}( z^\alpha)
	\end{aligned}
\end{equs}
In fact, since the populated multi-indices with $\triangleright$ of the present paper are the free Novikov algebra with $d+1$ generators, $T_{\ell}$ is the unique extension of \eqref{eq:signle_translation} as a Novikov morphism.

To solve RDEs driven by translated rough paths, having only the algebraic structures is not enough.  Measuring the regularity of translated rough paths is also necessary. Due to \cite[Thm 4.16]{Li23} and the fact that $|\cdot| \le |\cdot|_\gamma$, we have the following proposition which fulfils the role. 
\begin{proposition}\label{prop:regularity_translation}
	Let $\mbX$ be a multi-index rough path of H\"older regularity $\gamma$ and $N \in \mathbb{N}$ and $\{\ell_i\}_{i=0,\ldots, d}$ be a collection of characters $ \ell_i \colon \mathfrak{F} \rightarrow \mathbb{R} $.	  If $\ell_i(z^\alpha) = 0$ for any $|z^\alpha|_{\gamma} > N$ then    $T_{\ell}\mbX$ is a multi-index rough path of H\"older regularity $ \gamma/N$.
\end{proposition}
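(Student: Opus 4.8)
The statement is, modulo our Hopf-algebraic reformulation of multi-index rough paths (Definition~\ref{dfn:genRP}), \cite[Theorem~4.16]{Li23}; the plan is to verify directly the two clauses of Definition~\ref{dfn:genRP} for the map $\mathbf{Y} := T_{\ell}\mathbf{X}$ at the regularity $\gamma' := \gamma\wedge 1/N$, reading off its coefficients through the adjunction \eqref{eq:renormalisation}--\eqref{eq:adjoint} as $\mathbf{Y}_{s,t}(z^\beta) = \langle \mathbf{X}_{s,t}, M_{\ell}z^\beta\rangle$ with $M_{\ell} = (\ell\otimes\id)\Delta^{\!-}$.

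First I would use the hypothesis $\ell(z^\alpha)=0$ for $|z^\alpha|_\gamma>N$ to guarantee that $T_{\ell}$, hence $M_{\ell}$, is a finite map that descends to the truncated algebras at the new level $N_{\gamma'}$: by \eqref{dual_renormalisation} the element $T_{\ell}z^\beta$ is a finite combination of forests $z^{\bullet\tilde\alpha}\star_1 z^\beta$ in which every inserted multi-index has $\gamma$-norm at most $N$, and since $D$ leaves the $\gamma$-norm unchanged (\cite[Lemma~4.2,~4.6]{Li23}) one can track precisely the homogeneities involved; this bookkeeping is exactly what forces the gradation to pass from $|\cdot|_\gamma$ to $|\cdot|_{\gamma\wedge 1/N}$, and hence the regularity to drop to $\gamma\wedge 1/N$. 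Granting this, the group-like property $\mathbf{Y}_{s,t}(u\bullet v)=\mathbf{Y}_{s,t}(u)\mathbf{Y}_{s,t}(v)$ follows from the group property \eqref{group_property} of $\mathbf{X}$ once one knows that $M_{\ell}$ is a morphism for the forest product $\bullet$ — which it is, because $\ell$ is a character and $\Delta^{\!-}$ is compatible with $\bullet$ — so $\mathbf{Y}_{s,t}\in\mathcal{G}^{N_{\gamma'}}(\mathfrak{M})$. The Chen relation $\mathbf{Y}_{s,u}\star_{N_{\gamma'}}\mathbf{Y}_{u,t}=\mathbf{Y}_{s,t}$ then comes from applying the morphism identity \eqref{general_translation}, $T_{\ell}(u\star v)=(T_{\ell}u)\star(T_{\ell}v)$ — dually, $M_{\ell}$ intertwines the coproduct adjoint to $\star$ — to \eqref{eq:chen} for $\mathbf{X}$, using the consistency of truncations established above.

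For the analytic bound I would fix $z^\beta\in\mathfrak{M}^{N_{\gamma'}}$, expand $\mathbf{Y}_{s,t}(z^\beta)=\langle\mathbf{X}_{s,t},M_{\ell}z^\beta\rangle$ into the finite sum of coefficients $\mathbf{X}_{s,t}(w)$ indexed by the multi-indices $w$ appearing in $M_{\ell}z^\beta$ (each weighted by a value of $\ell$ and a symmetry factor), bound each such coefficient by a constant times $|t-s|^{|w|_\gamma}$ via \eqref{eq:genrpbound}, and then check the homogeneity comparison $|w|_\gamma\ge|z^\beta|_{\gamma\wedge 1/N}$ for all $w$ that occur for an admissible $\ell$. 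When $\gamma\le 1/N$ one has $\gamma'=\gamma$ and the comparison is immediate; when $\gamma>1/N$ the inequality is the delicate point, and it is precisely where the relation between $\Delta^{\!-}$ (equivalently $\star_1$, cf.\ \eqref{eq:adjoint}) and the $\gamma$-weight $\gamma^{-1}$ carried by a $z_{(0,k)}$-slot enters, the extremal situation being an inserted multi-index of $\gamma$-norm equal to $N$. Summing the finitely many terms yields $|\mathbf{Y}_{s,t}(z^\beta)|\lesssim|t-s|^{|z^\beta|_{\gamma'}}$ on a bounded time interval, which is \eqref{eq:genrpbound} for $\mathbf{Y}$.

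I expect the last paragraph to contain the only substantial work: making the homogeneity count for the extraction--contraction $\Delta^{\!-}$ explicit in the $\gamma>1/N$ regime, and checking which elements of $\mathfrak{M}^{N_{\gamma\wedge 1/N}}$ actually need to be controlled, so that no coefficient of $\mathbf{Y}$ escapes the estimate; the remaining steps are formal consequences of the morphism identities \eqref{general_translation} and the adjunction \eqref{eq:renormalisation}--\eqref{eq:adjoint} recorded above.
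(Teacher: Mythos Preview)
The paper gives no proof of this proposition: it simply states it as a consequence of \cite[Thm~4.16]{Li23}, which you correctly identify in your first sentence. Your subsequent sketch of a direct verification---checking the group property via $M_\ell$ being a $\bullet$-morphism, the Chen relation via \eqref{general_translation}, and the analytic bound via the homogeneity comparison under $\Delta^{\!-}$---goes beyond what the paper records and is a reasonable outline of how the cited result is obtained, so there is nothing to correct.
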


In the sequel, we focus on a simpler case where the translation $T_\ell^0$ happens only in the signal $t$. 
In other words, $\ell_i(z^\alpha) = \delta_{z^\alpha,z_{(i,0)}}$ for any $i \ne 0$. This case is where most applications (renormalisation, Itô-Stratonovich correction) come from. However, results in this section can be generalised to the general translation, provided all the algebraic objects are generalised in a suitable way.
Moreover, we would like to describe the translation operator defined in \eqref{general_tanslation_map} formally using some Hopf algebra. The first step is to focus on the ``generators" $\hat{T}^0_\ell$ and 
introduce the underlying pre-Lie product $ \blacktriangleright:  \mathfrak{M} \times \mathfrak{M} \rightarrow \mathrm{Span}_{\mathbb{R}}(\mathfrak{M})$ 
\begin{equs} \label{insertion_product}
	z^\alpha \blacktriangleright z^\beta : =\sum_{k \in  \mathbb{N}}\left(D^{k}z^{\alpha}\right)
	\left(\partial_{z_{(0,k)}}z^\beta\right)
\end{equs}
where $\partial_{z_{(0,k)}}$ is the partial derivative with respect to the variable $z_{(0,k)}$ (i.e.,  for any $z^\beta \in \mathfrak{M}$, if $\alpha(z_{(0,k)}) \ne 0$ then $ \partial_{z_{(0,k)}}z^\beta = \frac{z^{\beta}}{z_{(0,k)}} $, otherwise $ \partial_{z_{(0,k)}}z^\beta = 0$). 
Therefore, $\blacktriangleright$ represents the operation of replacing the variables $z_{(0,k)}$ by $D^{k}z^{\beta}$, which is the main idea of $\hat{T}_\ell^0$. This is also in accordance with the fact that, while renormalising the rough path, each divergent part is contracted to a single variable decorated by the $0$-th driven process $X^0 = t$ (see Section \ref{sec:example} below).  Notice that $z^\beta, z^\alpha \in \mathfrak{M}$ indicate that none of them is the empty forest. 
A similar product to \eqref{insertion_product} has been introduced in  \cite[Def. 4.3]{Li23} which is inspired by a product on trees in \cite[Sec. 3.4]{BM22}. Since both $z_{(i,k)}$ and $D^{k}$ have degree of $k$, and each of $z^{\alpha}$ and $z^{\beta}$ is populated, $z^\beta \blacktriangleright z^\alpha$ produces a linear combination of populated multi-indices.

Given a character $ \ell$, we can, therefore, describe the translation map $ T^0_{\ell} $ by defining $\hat{T}^0_{\ell}$ as the following.
\begin{equs}
	\hat{T}^0_{\ell}  \left(    z_{(i,k)} \right) = z_{(i,k)} \text{ for $i \neq 0$}, \quad 	\hat{T}^0_{\ell}  \left(    z_{(0,k)} \right) = 	\sum_{z^{\alpha} 
		\in \mathfrak{M}
	} \frac{\ell(z^{\alpha})}{S(z^{\alpha})}(z^\alpha \blacktriangleright z_{(0,k)}).
\end{equs}
From \cite[Lemma 4.2, 4.6]{Li23},
one proves \eqref{eq:morphism_insertion} and has
\begin{equs} \label{morphism}
	z^{\alpha} \blacktriangleright D^{k} z^{\beta} = D^k \left(	z^{\alpha} \blacktriangleright  z^{\beta} \right)
\end{equs}
which allows to say that
\begin{equs} \label{translation_map}
	T_{\ell}^0  \left(    z_{(i,0)} \right) = z_{(i,0)}, \quad 	
	T_{\ell}^0  \left(    z_{(0,0)} \right) =  \sum_{ z^{\alpha} 
		\in \mathfrak{M}} \frac{\ell(z^{\alpha})}{S(z^{\alpha})} z^\alpha 
\end{equs}
where we recover the definition given in \cite[Sec. 4.2]{Li23}. 
By the property \eqref{morphism}, one has for any $z^\alpha, z^\beta \in \mathfrak{M}$
\begin{equs} \label{morphism_induction}
	T_{\ell} (z^\alpha \star z^\beta) = (T_{\ell} z^\alpha) \star (T_{\ell} z^\beta)
\end{equs}
which, through induction on the norm of a multi-index, ensures the existence of  $T_{\ell} (z^\beta) $ generated by \eqref{general_tanslation_map} for any $z^\beta \in \mathfrak{M}$.
Then we have, for any $z^\beta \in \mathfrak{M}$,
\begin{equs} \label{dual_renormalisation}
	T^0_\ell (z^\beta) =  \sum_{  z^{\bullet\tilde \alpha} 
		\in \mathfrak{F}} \frac{\ell( z^{\bullet \tilde \alpha})}{S( z^{\bullet\tilde \alpha})}  z^{\bullet\tilde \alpha} \star_1 z^\beta,
\end{equs}
where $\star_1$ is the ``simultaneous insertion product" 	constructed upon the pre-Lie product $\blacktriangleright$  by the following definition.
\begin{definition} The simultaneous insertion product
	$\star_1: \mathfrak{F} \times \mathfrak{M} \rightarrow  \mathrm{Span}_{\mathbb{R}}(\mathfrak{M})$ is defined as
	\begin{equs} \label{multiple_insertion}
		\prod_{i =1}^{\bullet n} z^{\beta_i} \star_1 z^\alpha:= \sum_{k_1,...,k_n\in \mathbb{N}}  
		\left(\prod_{i=1}^nD^{k_i}z^{\beta_i}\right)\left[\left(\prod_{i=1}^n\partial_{z_{(0,k_i)}}\right)z^\alpha\right] 
	\end{equs}
	for $n \in \mathbb{N}_+$. 
	To describe the translation \eqref{general_tanslation_map} by \eqref{dual_renormalisation}, we also need the restriction that 
	\begin{equation}\label{convention2}
		n = |z^\alpha|_0 = |\alpha|_0,
	\end{equation}
	where $ |z^\alpha|_0 $ is the number of variables $ z_{(0,k)} $ in $ z^\alpha $.
	Moreover, to prevent the translation from killing the terms without any variable $z_{(0,k\in \mathbb{N})}$, we define the insertion of empty forest ($n=0$)
		\begin{equs} \label{eq:primative}
			\emptyset \star_1 z^{\alpha} := z^{\alpha}.
	\end{equs} 
\end{definition}
	The product $\star_1$ can be generalised to $\star_1: \mathfrak{F} \times \mathfrak{F} \rightarrow \mathrm{Span}_{\mathbb{R}}(\mathfrak{F})$, as we have the Leibniz rule on $D$ \eqref{Leibniz_D}.  Furthermore, we can take the linear extension to introduce $\star_1: \mathrm{Span}_{\mathbb{R}}(\mathfrak{F}) \times \mathrm{Span}_{\mathbb{R}}(\mathfrak{F}) \rightarrow \mathrm{Span}_{\mathbb{R}}(\mathfrak{F})$.
	\begin{remark}
		Recall that when we construct the associative product $\star$ from the pre-Lie product $\triangleright$, the  two-steps Guin-Oudom procedure described by \eqref{definition_star_2} and \eqref{definition_star} are used. Here the construction \eqref{multiple_insertion} is similar to the one of $\star_2$ by \eqref{definition_star_2}. Indeed, in both case, the elements $z^{\beta_i}$ in the forest multiply $z^\alpha$ independently, according to the corresponding pre-Lie product, and $z^{\beta_i}$ are forbidden from multiplying each other. Therefore, we can repeat the second step of the Guin-Oudom procedure \eqref{definition_star} to construct an associative product $\star'$ upon $\star_1$ by
		\begin{equs}
			\prod_{i=1}^{\bullet n} z^{\beta_i} \star'  z^{\bullet \tilde \alpha} := \mu  (\id  \otimes \cdot \star_1   z^{\bullet \tilde \alpha})  \Delta_\shuffle \left(\prod_{i=1}^{\bullet n}z^{\beta_i}\right).
		\end{equs}
\end{remark}

\subsection{Dual operators}	
	Recall that a multi-indices rough path  $\mathbf{X} \in \CH$ has the form
	\begin{equs}
		\mathbf{X} = \sum_{z^\beta \in \mathfrak{M}}\frac{{\mathbf{X}}(z^\beta)}{S(z^\beta)}z^\beta.
	\end{equs}
	One has $\langle \mathbf{X} , z^\beta \rangle = \mathbf{X}(z^\beta)$ and $\langle \mathbf{X} , z_{(i,0)} \rangle = X^i$. 
The translated rough path is given by 
	\begin{equs}
		T^0_\ell \mathbf{X} &= \sum_{z^\beta \in \mathfrak{M}}\frac{{\mathbf{X}}(z^\beta)}{S(z^\beta)}T^0_\ell z^\beta
	=
		\sum_{z^\beta \in \mathfrak{M}}
		\sum_{  z^{\bullet\tilde \alpha} \in \mathfrak{F}}
		\frac{{\mathbf{X}}(z^\beta)\ell( z^{\bullet \tilde \alpha})}{S(z^\beta)S( z^{\bullet\tilde \alpha})}
		z^{\bullet\tilde \alpha} \star_1 z^\beta
		.
	\end{equs}
	Now, if one wants to rewrite $T^0_\ell \mathbf{X}$ in the basis of $\mathfrak{M}$ as below
	\begin{equs}\label{eq:change_of_basis}
		T^0_\ell \mathbf{X} =  \sum_{z^\gamma \in \mathfrak{M}}\frac{\tilde {\mathbf{X}}(z^\gamma)}{S(z^\gamma)}z^\gamma,
	\end{equs}
	the correct coefficients $\tilde {\mathbf{X}}(z^\gamma) = \langle \tilde {\mathbf{X}}, z^\gamma \rangle$ should be determined. Therefore, it is necessary to identify those $z^{\bullet\tilde \alpha}$ and $z^\beta$ which produce $z^{\bullet\tilde \alpha} \star_1 z^\beta$ that contains a term of $z^\gamma$. This indicates the need for $\star_1$'s adjoint operator, called extraction-contraction coproduct $ \Delta^{\!-}: \mathrm{Span}_{\mathbb{R}}(\mathfrak{M}) \rightarrow \mathrm{Span}_{\mathbb{R}}(\mathfrak{F}) \otimes \mathrm{Span}_{\mathbb{R}}(\mathfrak{M})$ and is defined through the adjoint relation
\begin{equs}\label{eq:adjoint}
	\langle \CF \otimes z^\alpha, \Delta^{\!-} z^\beta \rangle = \langle \CF \star_1 z^\alpha,  z^\beta \rangle
\end{equs} 
for $z^\alpha, z^\beta \in \mathfrak{M}$ and $\CF \in \mathfrak{F}$. 
The explicit formula of $\Delta^{\!-}$ can be found in the same way as in \cite[Thm 3.10]{BH24}. We also require $\Delta^{\!-}$ to be linear in $\mathrm{Span}_{\mathbb{R}}(\mathfrak{M})$ since $\star_1$ is bilinear. Then we generalise the coproduct to the space $\mathfrak{F}$ by requiring $\Delta^{\!-}$ to preserve the forest product
\begin{equs} \label{eq:Delta_forest}
	\Delta^{\!-} (z^{\beta_1} \bullet z^{\beta_2}) := (\Delta^{\!-} z^{\beta_1})(\Delta^{\!-} z^{\beta_2})
\end{equs}
for any $z^{\beta_1}, z^{\beta_2} \in \mathfrak{M}$. 
Finally, one can generalise the extraction-contraction coproduct to $ \Delta^{\!-}: \mathrm{Span}_{\mathbb{R}}(\mathfrak{F}) \rightarrow \mathrm{Span}_{\mathbb{R}}(\mathfrak{F}) \otimes \mathrm{Span}_{\mathbb{R}}(\mathfrak{F})$ as the linear extension of the above-defined operator .This also generates automatically the adjoint relation to
\begin{equs}
	\langle \tilde \CF_1 \otimes \tilde \CF_2, \Delta^{\!-} \tilde \CF_3\rangle = \langle \tilde \CF_1 \star_1 \tilde \CF_2,  \tilde \CF_3 \rangle
\end{equs} 
for $\tilde \CF_1, \tilde \CF_2,  \tilde \CF_3 \in \mathrm{Span}_{\mathbb{R}}(\mathfrak{F})$ by using \eqref{eq:Delta_forest}.

Following the adjoint relation \eqref{eq:adjoint} between $\Delta^{\! -}$ and $\star_1$, 
we can find the adjoint operator of $T_{\ell}^0$ through
\begin{equs}
	\langle T^0_{\ell}z^\beta, z^\gamma \rangle =	\langle z^\beta, M^0_{\ell}z^\gamma \rangle
\end{equs}
where  $M_{\cdot}^0$ maps characters $\ell$ to a linear operator $M_{\ell}^0: \mathrm{Span}_{\mathbb{R}}(\mathfrak{F}) \rightarrow \mathrm{Span}_{\mathbb{R}}(\mathfrak{F})$
 with the following explicit expression.
\begin{equs}\label{eq:renormalisation}
	M_{\ell}^0  = \left(\ell \otimes \id \right)\Delta^{\!-}.
\end{equs}
One can also derive an adjoint relation for the general translation 
\begin{equs}\label{adjoint}
	\langle T_{\ell}z^\beta, z^\gamma \rangle =	\langle z^\beta, M_{\ell}z^\gamma \rangle
\end{equs}
with $	M_{\ell}  = \left(\ell \otimes \id \right) \hat\Delta^{\!-}.$
The explicit formula of $\hat\Delta^{\!-}$ can be obtained by the same method in \cite[Thm 3.10]{BH24}.
By \eqref{adjoint}, we have 
\begin{equs}
	T_{\ell}\mathbf{X}(z^\beta) =  \mathbf{X}(M_\ell z^\beta)
\end{equs}
which gives the coefficients in the basis-change problem \eqref{eq:change_of_basis}.

\subsection{Translations in rough differential equations}\label{sec:example}
Finally, we can demonstrate the equivalence between the translation in rough paths and the translation in the nonlinearity $f$ of a rough differential equation.
\begin{theorem}\label{thm:renormalization_RDE}
Given some integer $N \in \mathbb{N}_+$. Let  $\mbX$ be a multi-index rough path of H\"older regularity $\gamma$ and $f\in \CC^{\alpha_1}(\mathbb{R}, \mathbb{R}^d) $, $f_0 \in \mathcal{C}^{\alpha_2}$  vector fields with  $\alpha_1>NN_{\gamma}$, $ \alpha_2>1$. For any given $\{\ell_i\}_{i=0,\ldots, d}$ which is a collection of characters $ \ell_i \colon \mathfrak{F} \rightarrow \mathbb{R} $ and $i = 0,\ldots,d$  such that $\ell_i(z^\alpha) = 0$ for any $|z^\alpha|_{\gamma} > N$, a path $Y\colon[0,T]\to\mathbb{R} $ solves the equation
\begin{equs} \label{equa_renor}
dY_t = f(Y_t) d (T_{\ell}(\mbX_t))\,
\end{equs}
if and only if it solves 
$$
dY_t = f^{\ell} (Y_t) d \mbX_t, 
$$
where 
$$
f_i^{\ell} = \sum_{z^{\alpha} \in \mathfrak{M}} \frac{\ell_i(z^{\alpha})}{S(z^{\alpha})} \Upsilon_f[z^{\alpha}]\,.
$$

\end{theorem}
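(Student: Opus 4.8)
The plan is to argue entirely at the level of Davie solutions (Definition~\ref{Davie_solution}), for which Theorem~\ref{prop_Davie} already supplies existence and uniqueness. By Proposition~\ref{prop:regularity_translation} the driver $T_{\ell}\mbX$ of \eqref{equa_renor} is a multi-index rough path of regularity $\gamma'=\gamma\wedge 1/N$, so a path $Y$ solves \eqref{equa_renor} iff its increments satisfy
\[
Y_t-Y_s=\sum_{\substack{z^{\beta}\in\mathfrak{M}\\ 1\le |z^{\beta}|_{\gamma'}\le N_{\gamma'}}}\frac{\Upsilon_f[z^{\beta}](Y_s)}{S(z^{\beta})}\,(T_{\ell}\mbX)_{s,t}(z^{\beta})+O(|t-s|^{a'}),\qquad a'>1,
\]
while $Y$ solves $dY_t=f^{\ell}(Y_t)\,d\mbX_t$ iff the analogous estimate with $\Upsilon_{f^{\ell}}$, $\mbX$ and $N_{\gamma}$ holds. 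It therefore suffices to show that the two finite leading sums coincide up to an $O(|t-s|^{a})$ error with $a>1$; then each defining estimate implies the other, and the equivalence follows, existence and uniqueness in either setting being Theorem~\ref{prop_Davie}.

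The algebraic heart of the matter is the identity $\Upsilon_f[T_{\ell}u](\psi)=\Upsilon_{f^{\ell}}[u](\psi)$ for every $u\in\mathfrak{F}$ and $\psi\in\CC^{\infty}$. To prove it I would first reduce to a single populated multi-index: both sides are multiplicative for the forest product, $T_{\ell}$ preserves the number of components of a forest, and for $u=z^{\beta}$ the composition with $\psi$ only contributes the factor $\psi'$; so it is enough to check $\Upsilon_f[T_{\ell}z^{\beta}]=\Upsilon_{f^{\ell}}[z^{\beta}]$. Since $\Upsilon_f$ is multiplicative for the multi-indices (monomial) product and $T_{\ell}$ is, by construction, the corresponding algebra morphism fixed by its values \eqref{translation_map} on the generators $z_{(i,k)}$, this comes down to the generators. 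For $i\neq 0$ it is trivial because $T_{\ell}z_{(i,k)}=z_{(i,k)}$ and $f^{\ell}_i=f_i$; for $i=0$ one has $\partial_{z_{(0,j)}}z_{(0,k)}=\delta_{jk}$, hence $z^{\alpha}\blacktriangleright z_{(0,k)}=D^{k}z^{\alpha}$ and $T_{\ell}z_{(0,k)}=\sum_{z^{\alpha}\in\mathfrak{M}}\frac{\ell(z^{\alpha})}{S(z^{\alpha})}D^{k}z^{\alpha}$ (a finite sum, as $\ell$ is supported on $\mathfrak{M}^{N}$), and then iterating $k$ times the pre-Lie morphism step $\Upsilon_f[Dz^{\alpha}]=\partial\,\Upsilon_f[z^{\alpha}]$ from the proof of Proposition~\ref{morphism_element} gives $\Upsilon_f[T_{\ell}z_{(0,k)}]=\partial^{k}\bigl(\sum_{z^{\alpha}}\frac{\ell(z^{\alpha})}{S(z^{\alpha})}\Upsilon_f[z^{\alpha}]\bigr)=(f^{\ell}_0)^{(k)}$, as required.

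I would then transport this to the rough path by duality. Using $\langle z^{\gamma},z^{\gamma}\rangle=S(z^{\gamma})$, the symmetry of the pairing and the adjunction between $T_{\ell}$ and $M_{\ell}$ from \eqref{eq:renormalisation}, a coefficientwise check (pairing against an arbitrary $z^{\gamma}\in\mathfrak{M}$ and expanding the finite sum $T_{\ell}z^{\gamma}$) turns the previous identity into
\[
M_{\ell}\Bigl(\sum_{z^{\beta}\in\mathfrak{M}}\frac{\Upsilon_f[z^{\beta}](y)}{S(z^{\beta})}\,z^{\beta}\Bigr)=\sum_{z^{\gamma}\in\mathfrak{M}}\frac{\Upsilon_{f^{\ell}}[z^{\gamma}](y)}{S(z^{\gamma})}\,z^{\gamma}.
\]
Since $(T_{\ell}\mbX)_{s,t}$ is $T_{\ell}$ applied to $\mbX_{s,t}$, the adjunction gives $(T_{\ell}\mbX)_{s,t}(z^{\beta})=\langle\mbX_{s,t},M_{\ell}z^{\beta}\rangle$ (components of $M_{\ell}z^{\beta}$ of homogeneity $>N_{\gamma}$ being automatically killed by $\mbX_{s,t}$), so
\[
\sum_{z^{\beta}}\frac{\Upsilon_f[z^{\beta}](Y_s)}{S(z^{\beta})}\,(T_{\ell}\mbX)_{s,t}(z^{\beta})=\Bigl\langle\mbX_{s,t},\ M_{\ell}\Bigl(\sum_{z^{\beta}}\frac{\Upsilon_f[z^{\beta}](Y_s)}{S(z^{\beta})}\,z^{\beta}\Bigr)\Bigr\rangle=\sum_{z^{\gamma}}\frac{\Upsilon_{f^{\ell}}[z^{\gamma}](Y_s)}{S(z^{\gamma})}\,\mbX_{s,t}(z^{\gamma}),
\]
which is exactly the leading sum for $dY_t=f^{\ell}(Y_t)\,d\mbX_t$. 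The last point — which I expect to be the only genuinely technical one — is the bookkeeping of the two truncation orders $N_{\gamma'}\ge N_{\gamma}$: restricting the left sum to $|z^{\beta}|_{\gamma'}\le N_{\gamma'}$ and the right one to $|z^{\gamma}|_{\gamma}\le N_{\gamma}$ discards on each side only multi-indices whose $\mbX$- (resp.\ $T_{\ell}\mbX$-) increments are $O(|t-s|^{\theta})$ with $\theta>1$, by the homogeneity bound \eqref{eq:genrpbound}, the discreteness of the $\gamma$-scale, and the fact that translation does not lower homogeneities below the range controlled by $\mbX$ (this is the mechanism behind Proposition~\ref{prop:regularity_translation}). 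Hence the two leading sums agree up to $O(|t-s|^{a})$ with $a=\min\bigl(a',(N_{\gamma}+1)\gamma\bigr)>1$, and the equivalence of the two Davie problems follows in both directions.
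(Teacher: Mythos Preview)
Your proof is correct and follows the same overall strategy as the paper: reduce to Davie solutions, use the adjunction $T_{\ell}\leftrightarrow M_{\ell}$ to move the translation from the rough path to the elementary differentials, and verify $\Upsilon_f[T_{\ell}z^{\beta}]=\Upsilon_{f^{\ell}}[z^{\beta}]$ on the generators $z_{(i,k)}$.

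The organization differs slightly. The paper first rewrites $(T_{\ell}\mbX)_{s,t}(z^{\beta})=\mbX_{s,t}(M_{\ell}z^{\beta})$, then expands $M_{\ell}$ via the extraction--contraction coproduct $\Delta^{\!-}$ and its counting coefficients $n(\CF,z^{\alpha},z^{\beta})$, uses the $\star_1/\Delta^{\!-}$ adjunction to recover $T_{\ell}$ on the elementary-differential side, and only then checks the generator identity. You instead prove $\Upsilon_f[T_{\ell}z^{\beta}]=\Upsilon_{f^{\ell}}[z^{\beta}]$ first, exploiting that $T_{\ell}$ is by construction a monomial-algebra morphism (a consequence of $\ell$ being a character and the Leibniz behaviour of $\star_1$) together with $\Upsilon_f[D^{k}z^{\alpha}]=\partial^{k}\Upsilon_f[z^{\alpha}]$ extracted from Proposition~\ref{morphism_element}; the duality step is then a one-line coefficient comparison. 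This bypasses the explicit $\Delta^{\!-}$ machinery from \cite{BH24} and is more self-contained, at the cost of not displaying the extraction--contraction structure. You are also more explicit than the paper about the truncation mismatch between $N_{\gamma}$ and $N_{\gamma'}$, which the paper's proof handles somewhat loosely.
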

\begin{proof} Thanks to theorem \ref{prop_Davie} we can equivalently describe a local solution of \eqref{RDE} in terms of  Definition \ref{Davie_solution}. By simply applying this definition any local solution  $Y\colon [0, \tau]\to \mathbb{R}$ of \eqref{equa_renor} satisfies
	\begin{equation}\label{davie_sol_1}
		|Y_t- Y_s- \sum_{\substack{ z^{\beta}\in \mathfrak{M}\\1\leq \vert z^{\beta}\vert_{\gamma}\leq \lfloor N/ \gamma \rfloor}}\frac{\Upsilon_f[z^{\beta}](Y_s)}{S(z^{\beta})} (T_{\ell}\mbX_{s,t})( z^{\beta})| \leq c\,|t-s|^a
	\end{equation}
	for all $ 0\leq s\leq t\leq \tau$. 
	
Then, the main idea of the proof is performing change of basis to $T_{\ell}\mbX_{s,t}$ as indicated in \eqref{eq:change_of_basis}.
Because of the adjoint \eqref{adjoint} we can indeed rewrite the sum in the right-hand side of \eqref{davie_sol_1} as
	\begin{equs}
		 \sum_{\substack{z^{\beta}\in \mathfrak{M}\\1\leq \vert z^{\beta}\vert_{\gamma}\leq \lfloor N/ \gamma \rfloor}}\frac{\Upsilon_f[z^{\beta}](Y_s)}{S(z^{\beta})}  (T_{\ell} \mbX_{s,t}) (z^{\beta})
		 &=
		 \sum_{\substack{z^{\beta}\in \mathfrak{M} \\1\leq \vert z^{\beta}\vert_{\gamma}\leq \lfloor N/ \gamma \rfloor}}\frac{\Upsilon_f[z^{\beta}](Y_s)}{S(z^{\beta})}  \mbX_{s,t}(M_{\ell}z^{\beta})
		\\& =
		\sum_{\substack{z^{\beta}\in \mathfrak{M}\\1\leq \vert z^{\beta}\vert_{\gamma}\leq \lfloor 1/\gamma\rfloor}}\frac{\Upsilon_f[T_{\ell} z^{\beta}](Y_s)}{S(z^{\beta})} \mbX_{s,t}(z^{\beta} )\,.
	\end{equs} 
Finally we have to show
\begin{equs}
	\Upsilon_f[T_{\ell} z^{\beta}] = 	\Upsilon_{f^{\ell}}[ z^{\beta}].
\end{equs}
This follows from the definition of $T_{\ell}$ given in \eqref{general_tanslation_map} and by multiplicativity one has to check only for $ z_{(i,k)} $
\begin{equs}
	\Upsilon_f[T_{\ell} z_{(i,k)}] & =   \sum_{ z^{\alpha} 
		\in \mathfrak{M}} \frac{\ell_i(z^{\alpha})}{S(z^{\alpha})} \Upsilon_f[D^k z^\alpha]
	\\  & = \partial^k  \sum_{ z^{\alpha} 
		\in \mathfrak{M}} \frac{\ell_i(z^{\alpha})}{S(z^{\alpha})}  \Upsilon_f[ z^\alpha]
	\\ & = \partial^k   \Upsilon_{f^{\ell}}[z_{(i,0)}]
	\\ & = \partial^k   \Upsilon_{f^{\ell}}[ D^k z_{(i,0)}].
\end{equs}
Therefore we can rewrite \eqref{davie_sol_1} as
\begin{equation}\label{davie_sol_2}
		|Y_t- Y_s- \sum_{\substack{ z^{\beta}\in \mathfrak{M}\\1\leq \vert z^{\beta}\vert_{\gamma}\leq N_{\gamma}}}\frac{\Upsilon_{f^{\ell}}[z^{\beta}](Y_s)}{S(z^{\beta})} (\mbX_{s,t})( z^{\beta})| \leq c\,|t-s|^a
	\end{equation}
which is exactly the notion of Davie solution of \eqref{RDE} driven by $f^{\ell}$.
	\end{proof}
	 Theorem \ref{thm:renormalization_RDE} indicates that a path $Y\colon[0,T]\to\mathbb{R} $ solves the equation
		\begin{equs} 
			dY_t = f(Y_t) d \mathbf{B}^{\mathrm{Strat}} _t\,
		\end{equs}
		if and only if it solves respectively
		$$
		dY_t = f^{\ell} (Y_t) d  \mathbf{B}^{\mathrm{It\hat{o}}}_t, 
		$$
		where $\ell$ is given in \eqref{eq:correction_character} and
		$$
		f_i^{\ell} = \sum_{z^{\alpha} \in \mathfrak{M}} \frac{\ell_i(z^{\alpha})}{S(z^{\alpha})} \Upsilon_f[z^{\alpha}]\,.
		$$

To conclude, we discuss two applications of translations of multi-indices rough paths: Itô-Stratonovich correction and renormalisation. One example of Itô-Stratonovich correction with explicit computations will be given.
Itô integration and Stratonovich integration are two widely used interpretations to rough integrals. Itô integration produces martingales while Stratonovich integration obeys the integration by parts. The Stratonovich integral can be converted to an Itô integral by adding a ``Itô-Stratonovich correction" term.
\begin{equs}\label{eq:I_S_correction}
	\int_{s}^{t}\int_{s}^{r} \circ \mathrm{d}Z_u \circ \mathrm{d}X_r = \int_{s}^{t}\int_{s}^{r} \mathrm{d}Z_u  \mathrm{d}X_r +\frac{1}{2}\left[X,Y\right]_{s,t}
\end{equs}
where $\circ$ indicates Stratonovich integral and the $\left[X,Y\right]_{s,t}$ is the quadratic covariation of $X$ and $Y$ between time $s$ and $t$.  While solving a rough differential equation, one has to choose an interpretation of integrals according to the context of the equation. However, the Itô-Stratonovich correction can be described as a translation when one lifts the controls to rough paths. Theorem \ref{thm:renormalization_RDE} provides a explicit conversion between Itô and Stratonovich solutions of one RDE.

Let us consider the multi-indices rough paths  lifted from standard Brownian motion controls $\mathbf{B}$. We denote $\mathbf{B}^{\mathrm{Strat}}$ as the rough path lifted in the sense of Stratonovich integration and  $\mathbf{B}^{\mathrm{It\hat{o}}}$ is lifted in the sense of Itô integration. By \cite[Lemma 3.1]{Li23}, the unique lift of $\mathbf{B}^{\mathrm{Strat}}$  and the unique lift of $\mathbf{B}^{\mathrm{It\hat{o}}}$ are
\begin{equs} \label{eq:lift}
	&\mathbf{B}^{\mathrm{Strat}}_{s,t} (z^\beta)= \sum_{(i,k)} \sum_{\beta = e(i,k)+\beta_1+\ldots+\beta_k} \int_s^t \mathbf{B}^{\mathrm{Strat}}_{s,u} (z^{\beta_1})\ldots \mathbf{B}^{\mathrm{Strat}}_{s,u} (z^{\beta_k})\circ dB^i_u,
	\\&
	\mathbf{B}^{\mathrm{It\hat{o}}} = \sum_{(i,k)} \sum_{\beta = e(i,k)+\beta_1+\ldots+\beta_k} \int_s^t \mathbf{B}^{\mathrm{It\hat{o}}}_{s,u} (z^{\beta_1})\ldots \mathbf{B}^{\mathrm{It\hat{o}}}_{s,u} (z^{\beta_k})dB^i_u
\end{equs}
where $z^{\beta_j}$ are populated multi-indices, and $e(i,k)$ is the multi-index with the only non-zero entry to be $(i,k)$ and the frequency is $1$.

From \eqref{eq:I_S_correction} one can see that the Itô-Stratonovich correction corresponds to the following translation in rough paths.
\begin{align} 
&	\mathbf{B}^{\mathrm{Strat}} = T^0_{\ell} \mathbf{B}^{\mathrm{It\hat{o}}},
\\& \ell(z^\beta) = 
\begin{cases} \label{eq:correction_character}
 \frac{1}{2}\delta_{i,j}, & z^\beta = z_{(i,0)}z_{(j,1)} \text{ for } i,j=1,\ldots,d \\
	0, &\text{otherwise}
\end{cases}
\quad
\text{for } z^\beta \in \mathfrak{M}
\\&  \ell(\emptyset) = 1.
\end{align}
Then we can extend the valuation \eqref{eq:correction_character} to $\ell(\CF)$ with $\CF \in \mathfrak{F}$ since $\ell$ preserves the forest product. Then, according to the adjoint relation \eqref{adjoint}, we have 
\begin{equs} \label{correction_dual}
	\mathbf{B}^{\mathrm{Strat}} (z^\beta) = \mathbf{B}^{\mathrm{It\hat{o}}}(M_\ell^0 z^\beta). 
\end{equs}
 By \cite[Thm 3.10]{BH24}, we can calculate the explicit formula of $\Delta^{\!-}$. The only difference is that the variables produced from the contraction are specifically $z_{(0,k_i)}$ here. 
 To illustrate,  the coproduct acting on $z^\beta \in \mathfrak{M}$ is in the form of
 \begin{equs}\label{eq:coproduct}
 	\Delta^{\! -}(z^{\beta}) = \sum_{\substack{\CF \in \mathfrak{F}\\z^\alpha \in \mathfrak{M}}} n(\CF, z^\alpha, z^{\beta}) \CF \otimes z^\alpha\,,
 \end{equs}
 	where $n(\CF, z^\alpha, z^{\beta})$ is  a combinatorial factor counting the number of distinct extraction-contraction $z^{\beta}$ yielding $\CF \otimes z^\alpha$. The extraction-contraction is conducted as the following: 
 		\begin{enumerate}
 			\item Split the multi-index $\beta= \sum_{i=1}^{n} \hat{\gamma_i}$ for $ 1 \le n \le |z^\beta|$.
 			\item Extract each ``sub-multi-index" $z^{\hat{\gamma_i}}$ to a populated multi-index $z^{\gamma_i}$ which is obtained by applying $D^{k_i}$ to $z^{\hat{\gamma_i}}$. Put all the populated multi-indices together as a forest $\CF = \prod^{\bullet}_i z^{\gamma_i}$. Here all the sub-multi-indices are extracted because of the restriction \eqref{convention2} from its dual product $\star_1$.
 			\item contract each $z^{\hat{\gamma_i}}$ to a single variable $z_{(0,k_i)}$ and get $z^\alpha = \prod_{i=1}^nz_{(0,k_i)}$.
 			\item Add one extra term $\emptyset \otimes z^\beta$ due to \eqref{eq:primative}.
 		\end{enumerate}
Then, one can check the equality \eqref{correction_dual} explicitly through the formula \eqref{eq:coproduct}. In particular, \eqref{eq:I_S_correction} is recovered when $z^\beta = z_{(i,0)} z_{(j,1)}$.  We check the dual operator 
 	\begin{equation*}
 		M_\ell^0 z_{(i,0)} z_{(j,1)} = \left(\ell \otimes \id \right)\Delta^{\!-}z_{(i,0)} z_{(j,1)}  = \frac{1}{2}\delta_{i,j}z_{(0,0)} + z_{(i,0)} z_{(j,1)}. 
 \end{equation*}
 Then, according to the formulae \eqref{eq:lift}, the rough paths are lifted to
 \begin{equs}
 	  \mathbf{B}^{\mathrm{It\hat{o}}}(M_\ell^0 z_{(i,0)} z_{(j,1)}) &= \frac{1}{2}\delta_{i,j} (t-s) + \int_{s}^{t}\int_{s}^{r}  \mathrm{d}B^i_u \mathrm{d}B^j_r 
 	  \\&= 
 	  \int_{s}^{t}\int_{s}^{r} \circ \mathrm{d}B^i_u \circ \mathrm{d}B^j_r
 	  =
 	\mathbf{B}^{\mathrm{Strat}} (z_{(i,0)} z_{(j,1)}) 
 \end{equs}
As a second example, we consider the multi-indice $ z_{(i,0)}z_{(j,1)}z_{(k,1)} $. One has
 	\begin{equs}
 		\left(\ell \otimes \id \right)\Delta^{\!-} z_{(i,0)}z_{(j,1)}z_{(k,1)} &= z_{(i,0)}z_{(j,1)}z_{(k,1)}+ \frac{1}{2}\delta_{i,j}z_{(0,0)} z_{(k,1)}
 		\\&
 		+ \frac{1}{2}\delta_{i,k}z_{(0,0)} z_{(j,1)} + \delta_{j,k}z_{(i,0)} z_{(0,1)}
 	\end{equs}
 	Then, 
 	\begin{equs}
 		&\mathbf{B}^{\mathrm{It\hat{o}}}(M_\ell^0 z_{(i,0)}z_{(j,1)}z_{(k,1)} ) 
 		\\=& \int_{s}^{t}\int_{s}^{r}\int_{s}^{u}  \mathrm{d}B^i_v \mathrm{d}B^j_u \mathrm{d}B^k_r 
 		+ \int_{s}^{t}\int_{s}^{r}\int_{s}^{u}  \mathrm{d}B^i_v \mathrm{d}B^k_u \mathrm{d}B^j_r 
 		\\+&
 		 \frac{1}{2}\delta_{i,j} \int_{s}^{t}(r-s)\mathrm{d}B^k_r 
 		 + \frac{1}{2}\delta_{i,k} \int_{s}^{t}(r-s)\mathrm{d}B^j_r 
 		 +\delta_{j,k} \int_{s}^{t}\int_{s}^{r} B^i_u \mathrm{d}r
 	\end{equs}
 	On the other hand, 
 	\begin{equs}
 		&\mathbf{B}^{\mathrm{Strat}} (z_{(i,0)}z_{(j,1)}z_{(k,1)} ) 
 		\\=& \int_{s}^{t}\int_{s}^{r}\int_{s}^{u} \circ \mathrm{d}B^i_v \circ\mathrm{d}B^j_u \circ \mathrm{d}B^k_r 
 		+ \int_{s}^{t}\int_{s}^{r}\int_{s}^{u} \circ \mathrm{d}B^i_v \circ\mathrm{d}B^k_u \circ \mathrm{d}B^j_r 
 		\\= &
 		\int_{s}^{t}\left(\int_{s}^{r}\int_{s}^{u}  \mathrm{d}B^i_v \mathrm{d}B^j_u + \frac{1}{2}\delta_{i,j}(r-s)\right) \circ \mathrm{d}B^k_r 
 		\\+&
 		\int_{s}^{t}\left(\int_{s}^{r}\int_{s}^{u}  \mathrm{d}B^i_v \mathrm{d}B^k_u+\frac{1}{2}\delta_{i,k}(r-s)\right) \circ \mathrm{d}B^j_r 
 		\\= &
 		\frac{1}{2}\delta_{i,j} \int_{s}^{t}(r-s)  \mathrm{d} B^k_r 
 		+ \frac{1}{2}\delta_{i,k} \int_{s}^{t}(r-s)\mathrm{d}B^j_r 
 	    \\+&
 	    \int_{s}^{t}\int_{s}^{r}\int_{s}^{u}  \mathrm{d}B^i_v \mathrm{d}B^j_u \mathrm{d}B^k_r 
 	    + \int_{s}^{t}\int_{s}^{r}\int_{s}^{u}  \mathrm{d}B^i_v \mathrm{d}B^k_u \mathrm{d}B^j_r +\delta_{j,k} \int_{s}^{t}\int_{s}^{u} dB^i_v \mathrm{d}u
 	\end{equs}
 	where we applied the following the quadratic covariation 
 	\begin{equs}
 		\left[\int_{s}^{\cdot}\int_{s}^{u}  \mathrm{d}B^i_v \mathrm{d}B^j_u, B^k\right]_{s,t} = \frac{1}{2}\delta_{j,k}\int_{s}^{t}\int_{s}^{u} d B^i_v \mathrm{d}u = \left[\int_{s}^{\cdot}\int_{s}^{u}  \mathrm{d}B^i_v \mathrm{d}B^k_u, B^j\right]_{s,t}.
 	\end{equs}

Another example is renormalisation as stressed in \cite{BCF,BCFP} where the area of some rough paths needs to be renormalised.
The main idea there is to extract the divergent parts in the iterated integrals. This procedure can  be described with the same operator $M_{\ell}^0= \left(\ell \otimes \id \right)\Delta^{\!-}$
with a suitable character $\ell$. 
The formulation is  similar to the one given for renormalisation in regularity structures for decorated trees in \cite{BHZ}.


\begin{thebibliography}{Cha10}
	\expandafter\ifx\csname url\endcsname\relax
	\def\url#1{\texttt{#1}}\fi
	\expandafter\ifx\csname urlprefix\endcsname\relax\def\urlprefix{URL }\fi
	\expandafter\ifx\csname href\endcsname\relax
	\def\href#1#2{#2}\fi
	\expandafter\ifx\csname burlalt\endcsname\relax
	\def\burlalt#1#2{\href{#2}{\texttt{#1}}}\fi
	
	
%
%
	
\bibitem{B15}
I.~Bailleul.
\newblock { \em Flows driven by rough paths}. 
\newblock { Rev. Mat. Iberoamericana}, \textbf{31} no.~3,(2015), pp.901--934.
\newblock
\burlalt{doi:10.4171/RMI/858}{https://doi.org/10.4171/RMI/858}.

\bibitem{B15b}
I.~Bailleul.
\newblock { \em  Flows driven by Banach space valued rough paths}. 
\newblock {Séminaire de Probabilités}, \textbf{XLVI}, (2015), pp. 195--205.
\newblock
\burlalt{doi:10.1007/978-3-319-11970-0_7}{https://doi.org/10.1007/978-3-319-11970-0_7}.



 \bibitem{B21}
I.~Bailleul, \newblock { \em On the definition of a solution to a rough differential equation}. 
\newblock {Ann. Fac. Sci. Toulouse, Math}. \textbf{6} 30, no.~ 3, 463--478 (2021)
\newblock
\burlalt{doi:10.5802/afst.1681}{https://doi.org/10.5802/afst.1681}.






%





\bibitem{BCCH}
{ \rm Y. Bruned, A. Chandra, I. Chevyrev,
	M. Hairer}.
\newblock {\em Renormalising SPDEs in regularity structures}.
\newblock J. Eur. Math. Soc. (JEMS), \textbf{23}, no.~3, (2021), 869--947.
\newblock
\burlalt{doi:10.4171/JEMS/1025}{http://dx.doi.org/10.4171/JEMS/1025}.
{\tiny }	

\bibitem{BCF}
{\rm Y.~Bruned, I.~Chevyrev, P.~K. Friz}.
\newblock \emph{Examples of renormalized sdes}.
\newblock Stochastic Partial Differential Equations and Related
Fields,  303--317. Springer, 2018.
\newblock
\burlalt{doi:10.1007/978-3-319-74929-7_19}{https://dx.doi.org/10.1007/978-3-319-74929-7_19}.

\bibitem{BCFP} 
Y.~{Bruned}, I.~{Chevyrev}, P.~K. {Friz}, 
R.~{Preiss}.
\newblock { \em A rough path perspective on renormalization}.
\newblock {J. Funct. Anal.} \textbf{277}, no.~11, (2019), 108283.
\burlalt{doi:10.1016/j.jfa.2019.108283}{http://dx.doi.org/10.1016/j.jfa.2019.108283}.


\bibitem{BD23}
Y.~Bruned, V. Dotsenko.
\newblock {\textsl{Novikov algebras and multi-indices in regularity structures.}}
\newblock \burlalt{arXiv:2311.09091  }{http://arxiv.org/abs/2311.09091 }.


\bibitem{Bel23}
C.~Bellingeri.
\newblock{ \em Quasi-geometric rough paths and rough change of variable formula}.
\newblock { Ann. Inst. Henri Poincar\'{e} Probab. Stat.}, \textbf{59}, no.3,  (2023), 1398--1433.
\newblock
\burlalt{doi:10.1214/22-AIHP1297}{https://doi.org/10.1214/22-AIHP1297}.

\bibitem{BHE24}
Y.~Bruned, K.~Ebrahimi-Fard, Y.~Hou. \newblock { \em Multi-indice B-series}. \newblock J. Lond. Math. Soc. \textbf{111}, no.~1, (2025), e70049.
\newblock
\burlalt{doi:10.1112/jlms.70049}{https://doi.org/10.1112/jlms.70049}.

\bibitem{Bel22}
C.~Bellingeri P.~Friz, P., S.~Paycha, R. Prei{\ss} 
\newblock{ \em Smooth rough paths, their geometry and algebraic renormalization}.
\newblock { Vietnam J. Math.} \textbf{50}, no. 3, (2022), 719--761 
\newblock
\burlalt{doi:10.1007/s10013-022-00570-7}{https://doi.org/10.1007/s10013-022-00570-7}.

\bibitem{BH24}
Y.~Bruned, Y.~Hou. \newblock { \em Multi-indices coproducts from ODEs to singular SPDEs}. Trans. Amer. Math. Soc. \textbf{378} (2025), 4903--4928.
\burlalt{doi:10.1090/tran/9411}{https://doi.org/10.1090/tran/9411}.



\bibitem{BH25}
Y.~Bruned, Y.~Hou. \newblock { \em Renormalising Feynman diagrams with multi-indices}. \burlalt{arXiv:2501.08151} {https://arxiv.org/abs/2501.08151}.
	

	\bibitem{BHZ}
	{\rm Y. Bruned, M. Hairer, L. Zambotti}.
	\newblock {\em Algebraic renormalisation of regularity structures.}
	\newblock Invent. Math. \textbf{215}, no.~3, (2019), 1039--1156.
	\newblock
	\burlalt{doi:10.1007/s00222-018-0841-x}{https://dx.doi.org/10.1007/s00222-018-0841-x}.
	
	
	 \bibitem{BL24}
	Y.~{Bruned}, P.~{Linares}.
	\newblock {\em  A top-down approach to algebraic renormalization in regularity structures based on multi-indices.} Arch. Ration. Mech. Anal. \textbf{248}, 111 (2024). 
	\burlalt{doi:10.1007/s00205-024-02041-4}{http://dx.doi.org/10.1007/s00205-024-02041-4}. 
	
	\bibitem{BM22}
	Y.~Bruned, D.~Manchon.
	\newblock {\em Algebraic deformation for (S)PDEs}. J. Math. Soc. Japan. \textbf{75}, no.~2, (2023), 485--526.
	\newblock 
	\burlalt{doi:10.2969/jmsj/88028802}{http://dx.doi.org/10.2969/jmsj/88028802}.
	
	\bibitem{B19}
	I.~ Bailleul, S.~Riedel. \newblock{\em Rough flows}.  \newblock { J. Math. Soc. Japan}, \textbf{71}, no.~ 3,(2019), 915--978 
	\newblock
	\burlalt{doi:10.2969/jmsj/80108010}{https://doi.org/10.2969/jmsj/80108010}.
	 
\bibitem{Butcher72}
{ \rm J. C. Butcher}.
\newblock {\em An algebraic theory of integration methods.}
\newblock Math. Comp. \textbf{26}, (1972), 79--106.
\newblock \burlalt{doi:10.2307/2004720}{http://dx.doi.org/10.2307/2004720}.
	
\bibitem{CG96}
F.~Castell, J.~Gaines. \newblock { \em The ordinary differential equation approach to asymptotically efficient schemes for solution of stochastic differential equations}. \newblock{Ann. Inst. Henri Poincaré, Probab. Stat.} \textbf{32}, no.~2, (1996) 231--250.
\newblock
\burlalt{http://www.numdam.org/item?id=AIHPB_1996__32_2_231_0}{http://www.numdam.org/item?id=AIHPB_1996__32_2_231_0}.
	
	
	
	
	\bibitem{CK1}
	A.~Connes, D.~Kreimer.
	\newblock { \em Hopf algebras, renormalization and noncommutative geometry.}
	\newblock Comm. Math. Phys. \textbf{199}, no.~1, (1998), 203--242.
	\newblock
	\burlalt{doi:10.1007/s002200050499}{http://dx.doi.org/10.1007/s002200050499}.
	
	
	
	



	
	
	
	
	

	
%
	
 \bibitem{Manchon20}  C.~Curry, K.~Ebrahimi-Fard, D.~ Manchon, H.Z.~Munthe-Kaas. \newblock \emph{Planarly branched rough paths and rough differential equations on homogeneous spaces}. J. Differ. Equations \textbf{269}, No. 11, (2020)  9740--9782.
\newblock
\burlalt{doi:10.1016/j.jde.2020.06.058}{https://doi.org/10.1016/j.jde.2020.06.058}.

\bibitem{Davie} A. M. Davie. \newblock \emph{Differential Equations Driven by Rough Paths: An Approach via Discrete Approximation}. Applied Mathematics Research eXpress \textbf{2008}, (2008), abm009.
\newblock
\burlalt{doi:10.1093/amrx/abm009}{https://doi.org/10.1093/amrx/abm009}.

	\bibitem{DL}
{ \rm  A.~Dzhumadil'daev, C.~Löfwall}.
\newblock {\em  Trees, free right-symmetric algebras, free Novikov algebras and identities.}
\newblock Homology Homotopy Appl. \textbf{4}, no.~2, (2002), 165--190.


\bibitem{FrizHai}
P.~K. {Friz}, M.~{Hairer}.
\newblock \emph{{A Course on Rough Paths}}.
\newblock Springer International Publishing, 2020.
\newblock
\burlalt{doi:10.1007/978-3-030-41556-3}{https://dx.doi.org/10.1007/978-3-030-41556-3}.


 \bibitem{Gubinelli2004}
{\rm M.~Gubinelli}.
\newblock \emph{Controlling rough paths}.
\newblock J. Funct. Anal. \textbf{216}, no.~1, (2004), 86
-- 140.
\newblock
\burlalt{doi:10.1016/j.jfa.2004.01.002}{https://dx.doi.org/10.1016/j.jfa.2004.01.002}.



\bibitem{Gub06}
{\rm M.~Gubinelli}.
\newblock \emph{Ramification of rough paths}.
\newblock J. Differ. Equ. \textbf{248}, no.~4, (2010), 693 -- 721.
\burlalt{doi:10.1016/j.jde.2009.11.015}{https://dx.doi.org/10.1016/j.jde.2009.11.015}.
	




\bibitem{Guin1}
D. Guin,  J. M. Oudom, \emph{Sur l'alg\`ebre enveloppante d'une
	alg\`ebre pr\'{e}-{L}ie}, C. R. Math. Acad. Sci. Paris \textbf{340} (2005),
no.~5, 331--336.
\burlalt{doi:10.1016/j.crma.2005.01.010}{https://doi.org/10.1016/j.crma.2005.01.010}. 


\bibitem{Guin2}
D. Guin,  J. M. Oudom, \emph{On the {L}ie enveloping algebra of a pre-{L}ie algebra}, J.
K-Theory \textbf{2} (2008), no.~1, 147--167.
\burlalt{doi:10.1017/is008001011jkt037}{https://doi.org/10.1017/is008001011jkt037}. 	

	

%
%
%
	\bibitem{reg}
	{\rm M. Hairer}.
	\newblock {\em A theory of regularity structures.}
	\newblock Invent. Math. \textbf{198}, no.~2, (2014), 269--504.
	\newblock
	\burlalt{doi:10.1007/s00222-014-0505-4}{https://dx.doi.org/10.1007/s00222-014-0505-4}. 
	
	\bibitem{HK15}
	{\rm M.~Hairer, D.~Kelly}.
	\newblock \emph{Geometric versus non-geometric rough paths}.
	\newblock Ann. Inst. H. Poincaré Probab. Statist. \textbf{51}, no.~1,
	(2015), 207--251.
	\newblock
	\burlalt{doi:10.1214/13-AIHP564}{https://dx.doi.org/10.1214/13-AIHP564}.
	
	
	\bibitem{KL23}
	 H.~Kern, T.~Lyons.
\newblock{\em Flow techniques for non-geometric RDEs on manifolds}. \newblock \burlalt{arXiv:2310.13556}{https://arxiv.org/abs/2310.13556}.



\bibitem{Lejay}
A.~Lejay.
\newblock {\em Constructing general rough differential equations through flow approximations}. Electron. J. Probab.  \textbf{27}, (2022), 1--24.  
\burlalt{doi:10.1214/21-EJP717}{https://dx.doi.org/10.1214/21-EJP717}.


 \bibitem{Li23}
P.~Linares.
\newblock {\em Insertion pre-Lie products and translation of rough paths based on multi-indices}. To appear in Ann. Inst. Henri Poincar\'{e} Probab. Stat. 
\newblock \burlalt{arXiv:2307.06769
}{https://arxiv.org/abs/2307.06769}.


\bibitem{LOT}
P.~Linares, F.~Otto, M.~Tempelmayr.
\newblock {\em The structure group for quasi-linear equations via universal enveloping algebras}. Comm. Amer. Math.  \textbf{3}, (2023), 1--64.  
\burlalt{doi:10.1090/cams/16}{https://dx.doi.org/10.1090/cams/16}.


\bibitem{LO23}
P.~Linares F.~Otto.
\newblock {\em A tree-free approach to regularity structures: The regular case for quasi-linear equations}. 
\newblock \burlalt{arXiv:2207.10627 
}{https://arxiv.org/abs/2207.10627}.




\bibitem{LOTT}
{\rm P.~Linares, F.~Otto, M.~Tempelmayr, P.~Tsatsoulis}
\newblock {\em A diagram-free approach to the stochastic estimates in regularity structures.} Invent. Math. \textbf{237}, (2024), 1469--1565.
\burlalt{doi:10.1007/s00222-024-01275-z}{https://dx.doi.org/10.1007/s00222-024-01275-z}.

	
	\bibitem{lyons1998}
T. Lyons.
\newblock {\em Differential equations driven by rough signals.}
 Rev. Mat. Iberoam. \textbf{14}, no. 2, (1998).215--310
 \burlalt{doi:10.4171/rmi/240}{https://doi.org/10.4171/rmi/240}.
 
 

\bibitem{Tapia20} N.~Tapia,  L.~Zambotti. \newblock {\em The geometry of the space of branched rough paths.}  Proc. Lond. Math. Soc.  (3)\textbf{121}, no. 2, (2020). 220--251 
 \burlalt{doi:10.1112/plms.12311}{https://doi.org/10.1112/plms.12311}.


\bibitem{OSSW}
F.~Otto, J.~Sauer, S.~Smith, H.~Weber.
\newblock {\em A priori bounds for quasi-linear SPDEs in the full sub-critical regime}. J. Eur. Math. Soc. (JEMS)  \textbf{27}, No. 1, (2025), pp. 71--118.
\burlalt{doi:10.4171/JEMS/1574}{http://dx.doi.org/10.4171/JEMS/1574}.  


	


	

	
	

	
	
	
\end{thebibliography}
\end{document}